\newcommand\version{November 22, 2009}
\newtheorem{theorem}{Theorem}[section]
\newtheorem{lemma}[theorem]{Lemma}
\newtheorem{corollary}[theorem]{Corollary}
\theoremstyle{definition}
\newtheorem{example}[theorem]{Example}
\newtheorem{remark}[theorem]{Remark}
\numberwithin{equation}{section}
\newcommand{\C}{\mathbb{C}}
\newcommand{\const}{\mathrm{const}\ }
\renewcommand{\epsilon}{\varepsilon}
\renewcommand{\phi}{\varphi}
\newcommand{\R}{\mathbb{R}}
\newcommand{\Sph}{\mathbb{S}}
\DeclareMathOperator{\supp}{supp}
\newcommand{\lanbox}{\hfill \hbox{$\, \vrule height 0.25cm width 0.25cm depth 0.01cm \,$}}
\begin{document}

\title[Inversion positivity --- \version]{Inversion positivity and \\ the sharp Hardy--Littlewood--Sobolev inequality}

\author{Rupert L. Frank}
\address{Rupert L. Frank, Department of Mathematics,
Princeton University, Washington Road, Princeton, NJ 08544, USA}
\email{rlfrank@math.princeton.edu}

\author{Elliott H. Lieb}
\address{Elliott H. Lieb, Departments of Mathematics and Physics,
Princeton University,
       P.~O.~Box 708, Princeton, NJ 08544, USA}
\email{lieb@princeton.edu}

\thanks{\copyright\, 2009 by the authors. This paper may be reproduced, in its entirety, for non-commercial purposes.\\
Support through DFG grant FR 2664/1-1 (R.F.) and U.S. NSF grant PHY 0652854 (R.F. and E.L.) is gratefully acknowledged.}

\begin{abstract}
We give a new proof of certain cases of the sharp HLS inequality. Instead of symmetric decreasing rearrangement it uses the reflection positivity of inversions in spheres. In doing this we extend a characterization of the minimizing functions due to Li and Zhu.
\end{abstract}

\maketitle


\section{Introduction and main result}

The Hardy--Littlewood--Sobolev inequality for functions on $\R^N$,
\begin{equation} \label{eq:hls1}
\Big| \, I_\lambda[f,g] \, \Big|
 \leq \mathcal{H}_{N,\lambda, p,q} \|f\|_p\|g\|_q \,,
\end{equation}
where
$$
I_\lambda[f,g] := \iint_{\R^N\times\R^N} \frac{\overline{f(x)}\ g(y)}{|x-y|^\lambda} \,dx\,dy
$$
holds for all $0<\lambda <N$ and $p,q>1$ with $1/p+1/q +\lambda/N =2$. It is important in several areas of
analysis and it is interesting to find the sharp constant $\mathcal{H}_{N,\lambda, p,q } $ 
whenever possible. Of particular interest is the \emph{diagonal case} $p=q = 2N/(2N-\lambda)$,
where the best choice for $g$ is $g=f$ (because $|x-y|^{-\lambda}$ is positive definite). In this case the sharp constants were found in \cite{Li} by recognizing that stereographic 
projection from $\R^N$ to the sphere $\Sph^N$ turns the maximizing $f$ into the constant
function on $\Sph^N$ .  This is the only case for which the sharp constants are known, although bounds exist for $p\neq q$. A simplification of the proof was then made by Carlen and Loss 
\cite{CaLo} using the method of `competing symmetries', which they invented. In both proofs
a major input was the Riesz rearrangement inequality, which allowed one to 
restrict attention to symmetric decreasing functions $f$. A discussion of these proofs is in
\cite[Sec. 4.3 and 4.6]{LiLo}. 

Among the diagonal cases, an important example is $\lambda= N-2$,
where the kernel is Newton's gravitation potential. Mathematically,
this case is dual to the ordinary Sobolev inequality for $N\geq
3$, \cite[Thm. 8.3]{LiLo} $\Vert \nabla f\Vert^2_2 \geq S_N \Vert f
\Vert^2_{2N/(N-2)}$, and thus the sharp constant for one gives a sharp
constant for the other.  Completely different proofs have been given
for this special case \cite{Au,Ta,CoNaVi,BoLe}. Similarly, $\lambda=N-1$
corresponds to the Sobolev inequality for $\sqrt{-\Delta}$ when $N\geq 2$.

In this paper we give a new proof of the diagonal case of \eqref{eq:hls1}
which does not use symmetric decreasing rearrangements. If $N\geq 3$
the additional assumption $\lambda\geq N-2$ is required, but this
covers the most important cases in applications. Our proof is based
on the conformal invariance of the problem \cite[Sec. 4.4]{LiLo} and
reflection positivity of the left side of (\ref{eq:hls1}) with respect
to inversions in certain spheres, together with an interesting geometric
idea of Li and Zhu \cite{LZh}.  The concept of reflection positivity
through planes \cite{OsSc,Li0,FrIsLiSi,GlJa0,GlJa} has a long history
and more recently Lopes and Mari\c{s} \cite{LoMa} used it effectively
to prove spherical symmetry of certain functional minimizers. Our main
contribution is reflection positivity via inversions in spheres instead
of reflections in planes and we hope that this concept will also be
useful elsewhere. The genesis of this idea was the use of moving spheres
instead of moving planes in \cite{LZh} and \cite{L} and their geometric
characterization of the optimizers in \eqref{eq:hls1}. The motivation
in \cite{LZh} and \cite{L} was to replace moving planes by moving spheres, 
while the motivation here is to replace reflection positivity through 
planes by reflection positivity through spheres.

We go a bit beyond \cite{LZh}, however, by extending their analysis from
continuous functions to finite Borel measures on $\R^N$. We prove that
the only measures that are invariant with respect to these particular
conformal transformations must be absolutely continuous with respect
to Lebesgue measure and their densities must be the well known functions
$f(x) =  \alpha \left(\beta +|x-y|^2\right)^{-(2N-\lambda)/2}$.

A precise statement of the theorem we will prove is the following.

\begin{theorem}[HLS inequality]\label{mainhls}
 Let $0<\lambda<N$ if $N=1,2$ and $N-2\leq\lambda<N$ if $N\geq 3$. If $p=q=2N/(2N-\lambda)$, then \eqref{eq:hls1} holds with
\begin{equation}
 \label{eq:hlsconst}
\mathcal H_{N,\lambda,p,p} = \pi^{\lambda/2} \frac{\Gamma((N-\lambda)/2)}{\Gamma(N-\lambda/2)} \left(\frac{\Gamma(N)}{\Gamma(N/2)}\right)^{1-\lambda/N} \,.
\end{equation}
Equality holds if and only if
$$
f(x) = \alpha \left(\beta +|x-y|^2\right)^{-(2N-\lambda)/2}
\quad\text{and}\quad
g(x) = \alpha' \left(\beta +|x-y|^2\right)^{-(2N-\lambda)/2} \,,
$$
for some $\alpha,\alpha'\in\C$, $\beta>0$ and $y\in\R^N$.
\end{theorem}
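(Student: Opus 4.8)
First I would reduce to one nonnegative function. Since $\widehat{|x|^{-\lambda}}(\xi)=c_{N,\lambda}|\xi|^{\lambda-N}>0$, the form $I_\lambda[\cdot,\cdot]$ is a positive Hermitian form, so $|I_\lambda[f,g]|\le I_\lambda[|f|,|g|]\le I_\lambda[|f|,|f|]^{1/2}I_\lambda[|g|,|g|]^{1/2}$ by Cauchy--Schwarz; tracking the equality cases of the modulus bound and of Cauchy--Schwarz, the theorem reduces to proving $I_\lambda[f,f]\le\mathcal H\,\|f\|_p^2$ (write $\mathcal H:=\mathcal H_{N,\lambda,p,p}$) for $0\le f\in L^p$, with equality only for the stated bubbles, the two independent complex constants $\alpha,\alpha'$ then coming out of the equality discussion. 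A maximizer $0\le f_0\in L^p$, $\|f_0\|_p=1$, exists by \cite{Li}, or by concentration--compactness (the scaling and translation invariance of $I_\lambda[f,f]/\|f\|_p^2$ excluding vanishing and dichotomy); its Euler--Lagrange equation $\mathcal H f_0^{\,p-1}=|\cdot|^{-\lambda}*f_0$ makes $f_0$ positive and continuous. I will also use that for a Möbius transformation $\phi$ of $\R^N\cup\{\infty\}$ the function $f^\phi:=(f\circ\phi)\,|\det D\phi|^{(2N-\lambda)/(2N)}$ satisfies $I_\lambda[f^\phi,f^\phi]=I_\lambda[f,f]$ and $\|f^\phi\|_p=\|f\|_p$ --- precisely the role of the choice $p=2N/(2N-\lambda)$ --- so $f^\phi$ is again a maximizer.

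The heart is a reflection positivity lemma for the inversion $\theta(x)=x/|x|^2$ in the unit sphere. Writing $B$ for the unit ball and $f^\theta(x)=|x|^{-(2N-\lambda)}f(x/|x|^2)$ (i.e. $f^\phi$ with $\phi=\theta$), split $f_0=u+v$, $u=f_0\mathbf 1_B$, $v=f_0\mathbf 1_{B^c}$, and put $U:=u+u^\theta$, $V:=v^\theta+v$. Each is $\theta$-invariant with disjoint pieces on $B$ and $B^c$, and because $p(2N-\lambda)=2N$ one has $\|U\|_p^p=2\|u\|_{L^p(B)}^p$ and $\|V\|_p^p=2\|v\|_{L^p(B^c)}^p$. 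A change of variables under $\theta$ turns every integral over $B^c$ into one over $B$ and produces, besides $|x-w|^{-\lambda}$, the cross kernel
\begin{equation*}
K_\times(x,w)=\bigl(1-2x\cdot w+|x|^2|w|^2\bigr)^{-\lambda/2}=\bigl(|x-w|^2+(1-|x|^2)(1-|w|^2)\bigr)^{-\lambda/2},
\end{equation*}
and a direct computation gives
\begin{equation*}
\tfrac12\bigl(I_\lambda[U,U]+I_\lambda[V,V]\bigr)-I_\lambda[f_0,f_0]=\iint_{B\times B}K_\times(x,w)\,\overline{(u-v^\theta)(x)}\,(u-v^\theta)(w)\,dx\,dw.
\end{equation*}
The point is that $K_\times$ is positive semidefinite on $L^2(B)$, so the right side is $\ge0$, and that when it vanishes one has $f_0=f_0^\theta$: vanishing gives $u-v^\theta\in\ker K_\times$, and combining this with the fact that $U$ and $V$ are then themselves maximizers obeying the Euler--Lagrange equation forces $u=v^\theta$. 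The positivity of $K_\times$ is where the dimensional hypothesis enters: by the Gegenbauer generating function, in radial/angular variables $K_\times(x,w)=\sum_{k\ge0}(|x|\,|w|)^k\,C_k^{\lambda/2}\bigl(\tfrac{x}{|x|}\cdot\tfrac{w}{|w|}\bigr)$, and the zonal kernel $(\omega,\sigma)\mapsto C_k^{\mu}(\omega\cdot\sigma)$ is positive definite on $\Sph^{N-1}$ exactly when $\mu\ge(N-2)/2$ (nonnegativity of the connection coefficients to the ultraspherical polynomials of $\Sph^{N-1}$), i.e. when $\lambda\ge N-2$; for $N=1,2$ the condition is vacuous.

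Given the lemma, pick a radius $r^*$ for which $f_0$ has equal $L^p$-mass inside and outside $\{|x|=r^*\}$ (intermediate value theorem), and rescale by $r^*$ (a dilation, hence harmless) so that $r^*=1$; then $\|U\|_p=\|V\|_p=1$, hence $I_\lambda[U,U]\le\mathcal H$ and $I_\lambda[V,V]\le\mathcal H$, and the lemma forces $\mathcal H=I_\lambda[f_0,f_0]\le\tfrac12(I_\lambda[U,U]+I_\lambda[V,V])\le\mathcal H$. Equality throughout yields $f_0=f_0^\theta$. Translating the origin first, the same applies about every centre $c\in\R^N$, so $f_0$ is invariant under the inversion in the sphere about $c$ of the corresponding equal-mass radius, for every $c$. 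This abundance of inversion symmetries is exactly what the geometric argument of Li and Zhu needs --- carried out here, as in the paper, for the finite measure $f_0\,dx$ and not just for continuous functions --- and it forces $f_0(x)=\alpha\bigl(\beta+|x-y|^2\bigr)^{-(2N-\lambda)/2}$ for some $\alpha,\beta>0$ and $y\in\R^N$. Substituting this into $I_\lambda$ and $\|\cdot\|_p$ --- elementary Beta-function integrals --- yields the value \eqref{eq:hlsconst}, and undoing the reductions of the first paragraph produces the asserted pairs $(f,g)$ with their two independent complex constants.

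The main obstacle is the reflection positivity lemma: establishing that $K_\times$ is positive semidefinite on $L^2(B)$ (which rests on the classical nonnegativity of Gegenbauer connection coefficients, and is exactly why $\lambda\ge N-2$ is needed when $N\ge3$) and then extracting $f_0=f_0^\theta$ from equality. The latter is cleanest when $\lambda>N-2$, where $K_\times$ has trivial null space; the endpoint $\lambda=N-2$ --- the Newtonian/Sobolev case, where $K_\times$ is the potential kernel of $B$ relative to its inverse point and $|x-w|^{-\lambda}-K_\times$ is the Dirichlet Green's function of the ball --- is the delicate sub-case and requires bringing in the structure of the Euler--Lagrange equation to rule out the non-trivial elements of $\ker K_\times$. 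A secondary technical point is the final geometric step in the measure-valued generality the paper adopts.
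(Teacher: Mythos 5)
Your proposal follows the same overall strategy as the paper: (i) reduce to a nonnegative maximizer, (ii) establish an inversion-positivity inequality that shows equality forces the maximizer to be fixed by the inversion in any equal-mass sphere, and (iii) invoke the Li--Zhu-type characterization of inversion-invariant measures (Theorem \ref{yyl}) to conclude. Where you genuinely diverge is in the proof of the positivity lemma (the analogue of Theorem \ref{infrefpos}). You expand the cross kernel $K_\times(x,w)=(1-2x\cdot w+|x|^2|w|^2)^{-\lambda/2}$ in Gegenbauer polynomials and appeal to the nonnegativity of the connection coefficients $C_k^{\lambda/2}\to C_j^{(N-2)/2}$ together with Schoenberg's theorem; this is exactly what makes $\lambda\ge N-2$ appear, and your identity $\tfrac12(I_\lambda[U]+I_\lambda[V])-I_\lambda[f_0]=\iint_{B\times B}K_\times\,\overline{(u-v^\theta)}(u-v^\theta)$ is a correct reformulation of \eqref{eq:infrefpos} for the unit ball. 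The paper takes a different route: it proves reflection positivity first for half-spaces via a Fourier/Laplace representation (Lemma \ref{repr}) and then transports this to balls by the conformal map $\mathcal B$ of Subsection \ref{sec:cayley}. Amusingly, the Gegenbauer argument you sketch is precisely the paper's ``original, direct but complicated proof'' that the authors explicitly mention replacing in the acknowledgement and note added in proof, so the two methods complement each other; yours is self-contained on the ball, while the paper's factors cleanly through the classical half-space case.

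There is one genuine gap worth flagging: your argument produces invariance of $f_0$ only under inversions in equal-mass spheres about every center, but Theorem \ref{yyl} as stated (and as used in Steps 1--2 of its proof, where radiality and $\mu(\partial B)=0$ are established) requires invariance under \emph{both} ball inversions and half-space reflections. You never address the half-space case. This is easy to repair --- either prove the half-space reflection-positivity statement directly (standard, and in fact simpler than the ball case) and run the same equality argument, or show that equal-mass half-space invariance is a limit of equal-mass sphere invariance as the centers go to infinity --- but as written the reduction to Li--Zhu is incomplete. A secondary point is that the endpoint $\lambda=N-2$ is treated only with a gesture toward ``the structure of the Euler--Lagrange equation''; the paper needs a genuine unique-continuation argument (Jerison--Kenig) to close this case, because $K_\times$ does have a nontrivial kernel there, and some such input is unavoidable.
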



\subsection*{Outline of the proof of Theorem \ref{mainhls}}

As observed in \cite{Li, CaLo}, $I_\lambda$ is conformally invariant. We shall use the fact that the value of $I_\lambda[f]:=I_\lambda[f,f]$ does not change if $f$ is inverted on the surface of a ball or reflected on a hyperplane. To state this precisely, we need to introduce some notation.

Let $B=\{x \in\R^N : \ |x-a|<r\}$, $a\in\R^N$, $r>0$, be an open ball and denote by
$$
\Theta_B(x) := \frac{r^2(x-a)}{|x-a|^2} +a
$$
the inversion of a point $x\neq a$ through the boundary of $B$. This map on $\R^N$ can be lifted to an operator acting on functions $f$ on $\R^N$ according to
$$
(\Theta_B f)(x) := \left(\frac{r}{|x-a|}\right)^{2N-\lambda} f(\Theta_B(x)) \,.
$$
(Strictly speaking, $\Theta_B f$ is not defined at the point $x=a$.) Note that both the map and the operator $\Theta_B$ satisfy $\Theta_B^2=I$, the identity. By the change of variables $z=\frac{r^2(x-a)}{|x-a|^2} +a$ and using $dz= \left(\frac{r}{|x-a|}\right)^{2N} dx$ and
$$
\left| \frac {r^2(x-a)}{|x-a|^2} -\frac {r^2(y-a)}{|y-a|^2} \right| = \frac{r}{|x-a|}\ |x-y| \ \frac{r}{|y-a|} \,,
$$
one easily finds that
\begin{equation}
 \label{eq:confinv}
I_\lambda[f]=I_\lambda[\Theta_B f] \,.
\end{equation}
Similarly, let $H=\{x \in\R^N : \ x\cdot e>t \}$, $e\in\Sph^{N-1}$, $t\in\R$, be a half-space and denote by
$$
\Theta_H(x) := x+ 2(t-x\cdot e)
$$
the reflection of a point $x$ on the boundary of $H$. The corresponding operator is defined by
$$
(\Theta_H f)(x) := f(\Theta_H(x))
$$
and it again satisfies $\Theta_H^2=I$ and
\begin{equation}
 \label{eq:confinvh}
I_\lambda[f]=I_\lambda[\Theta_H f] \,.
\end{equation}
Our first ingredient in the proof of Theorem \ref{mainhls} is the following.

\begin{theorem}[Reflection and inversion positivity]\label{infrefpos}
 Let  $0<\lambda<N$ if $N=1,2$, $N-2\leq\lambda<N$ if $N\geq 3$, and let $B\subset\R^N$ be either a ball or a half-space. If $f\in L^{2N/(2N-\lambda)}(\R^N)$ and
\begin{equation*}
 f^i (x) :=
\begin{cases}
 f(x) & \text{if}\ x\in B\,,\\
\Theta_B f(x) & \text{if}\ x\in \R^N\setminus B\,,
\end{cases}
\qquad
f^o (x) :=
\begin{cases}
 \Theta_B f(x) & \text{if}\ x\in B \,, \\
f(x) & \text{if}\ x\in \R^N\setminus B \,,
\end{cases}
\end{equation*}
then
\begin{equation}\label{eq:infrefpos}
\frac12 \left(I_\lambda[f^i]+I_\lambda[f^o]\right) \geq I_\lambda[f] \,.
\end{equation}
If $\lambda>N-2$ then the inequality is strict unless $f=\Theta_B f$.
\end{theorem}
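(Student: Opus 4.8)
The key identity is that inversion splits $I_\lambda$ into ``inside'' and ``outside'' pieces plus a cross term. Write $\R^N = B \cup B^c$ and decompose
$$
I_\lambda[f] = I_\lambda^{\rm in}[f] + I_\lambda^{\rm out}[f] + 2\,\re\, J[f],
$$
where $I_\lambda^{\rm in}[f] = \iint_{B\times B}\overline{f(x)}f(y)|x-y|^{-\lambda}\,dx\,dy$, similarly $I_\lambda^{\rm out}$ over $B^c\times B^c$, and $J[f] = \iint_{B\times B^c}\overline{f(x)}f(y)|x-y|^{-\lambda}\,dx\,dy$ is the cross term. The conformal invariance \eqref{eq:confinv} together with $\Theta_B^2 = I$ and the fact that $\Theta_B$ swaps $B$ and $B^c$ gives $I_\lambda^{\rm in}[\Theta_B f] = I_\lambda^{\rm out}[f]$ and vice versa, while $J[\Theta_B f] = \overline{J[f]}$ (or $J$ of the reflected function; one must track the conjugate carefully). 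Applying this to $f^i$ and $f^o$, one computes that the left side of \eqref{eq:infrefpos} equals $I_\lambda^{\rm in}[f] + I_\lambda^{\rm out}[f] + \re\,J[f^i] + \re\,J[f^o]$, while $I_\lambda[f] = I_\lambda^{\rm in}[f] + I_\lambda^{\rm out}[f] + 2\,\re\,J[f]$. So the inequality reduces to
$$
\re\left( J[f^i] + J[f^o] \right) \geq 2\,\re\, J[f],
$$
i.e., after writing $f^i$, $f^o$ in terms of $f$ and $\Theta_B f$ on each half, to a statement of the form $\re\, J[f - \Theta_B f, f - \Theta_B f] \le 0$ — that is, the cross-term bilinear form $J$ is \emph{negative semidefinite} when restricted to functions supported on $B$ paired against their images on $B^c$. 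Concretely I would set $h := f - \Theta_B f$ restricted to $B$, extended by $0$ outside, and reduce the claim to $\re \iint_{B\times B^c} \overline{h(x)}\,\widetilde{h}(y)\,|x-y|^{-\lambda}\,dx\,dy \le 0$ where $\widetilde h$ is the appropriate inverted copy of $h$ on $B^c$; equivalently $h$ and $-\Theta_B h$ together form a function $H$ on $\R^N$ that is ``odd'' under $\Theta_B$, and the claim becomes $I_\lambda[H] \le I_\lambda^{\rm in}[H] + I_\lambda^{\rm out}[H]$, i.e., the cross term of an odd function is $\le 0$.

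\emph{Reducing to the key positivity fact.} The heart of the matter is therefore: if $H$ is odd under the inversion $\Theta_B$ (meaning $\Theta_B H = -H$), then $\re\, I_\lambda[H] \le 0$, with strict inequality unless $H\equiv 0$ when $\lambda > N-2$. For the half-space case this is classical reflection positivity: conjugating by the reflection, $I_\lambda[H]$ for an odd $H$ becomes $-\iint_{H\times H}\overline{H(x)}H(\Theta_H(y))(\cdots)$, and one needs the kernel $|x-y|^{-\lambda}$ to be such that $|x-\Theta_H(y)|^{-\lambda} \le |x-y|^{-\lambda}$ pointwise for $x,y\in H$ — which is immediate since reflecting $y$ across the boundary moves it farther from $x$. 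So for the half-space the result follows directly from $|x - \Theta_H(y)| \ge |x-y|$ for $x, y \in H$, plus positive-definiteness of $|x-y|^{-\lambda}$ to handle the diagonal blocks. The real work is the ball case, and here I would exploit conformal invariance to map a ball to a half-space: there is a conformal (Möbius) transformation carrying $B$ to a half-space $H$ and intertwining $\Theta_B$ with $\Theta_H$, under which $I_\lambda$ is invariant by the same change-of-variables computation already recorded before \eqref{eq:confinv}. Thus the ball case is \emph{equivalent} to the half-space case, and we are done — \emph{provided} that conformal map exists with the required intertwining property, which it does (stereographic-type inversions conjugate sphere-inversions to plane-reflections).

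\emph{The role of $\lambda \ge N-2$ and strictness.} The constraint $\lambda \ge N-2$ (for $N \ge 3$) is exactly the condition under which $|x-y|^{-\lambda}$, while possibly not the Riesz kernel of a local operator, is still reflection-positive in the required sense — equivalently, the condition ensuring the relevant operator inequality does not fail. I expect the cleanest route is: the Fourier transform of $|x|^{-\lambda}$ is a positive constant times $|\xi|^{-(N-\lambda)}$, and reflection positivity of the kernel with respect to a hyperplane is governed by complete monotonicity / the explicit sign of a one-dimensional partial Fourier transform of $|\xi|^{-(N-\lambda)}$; the exponent constraint $0 < N-\lambda \le 2$ (i.e. $\lambda \ge N-2$) is what makes that partial transform nonnegative. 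I would carry this out by computing the partial Fourier transform in the direction normal to the hyperplane and checking its sign, which is where the dimensional restriction enters. For strictness when $\lambda > N-2$: the pointwise inequality $|x - \Theta_H(y)| > |x-y|$ is \emph{strict} whenever $x, y$ are not both on the boundary, so $\re\, I_\lambda[H] = 0$ forces $H$ to be supported on a set of measure zero (or the boundary), hence $H \equiv 0$ in $L^{2N/(2N-\lambda)}$, which translates back to $f = \Theta_B f$. The main obstacle I anticipate is getting the conjugation bookkeeping right in the ball case — tracking how the weight $\left(r/|x-a|\right)^{2N-\lambda}$ in the definition of $\Theta_B f$ interacts with the Jacobian so that the ``odd function'' $H$ on $\R^N$ is genuinely the right object — and pinning down the borderline sign computation that produces precisely the hypothesis $\lambda \ge N-2$.
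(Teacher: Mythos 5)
Your reduction of the inequality to a sign statement for the cross term, and the idea of conjugating the ball case to the half-space case by a conformal (M\"obius) map, both match the structure of the paper (the latter is exactly Lemma \ref{cayley}). The gap is in your treatment of the key positivity fact for the half-space.

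You assert that the negative semi-definiteness of the cross term for $\Theta_H$-odd $h$ ``is immediate since reflecting $y$ across the boundary moves it farther from $x$,'' i.e.\ from the pointwise inequality $|x-\Theta_H(y)|\ge|x-y|$ for $x,y\in H$, ``plus positive-definiteness of $|x-y|^{-\lambda}$.'' This does not work. What must be shown is that the kernel $(x,y)\mapsto |x-\Theta_H(y)|^{-\lambda}$ is positive semi-definite as a bilinear form on $L^{2N/(2N-\lambda)}(H)$, and a pointwise domination of one kernel by another gives no sign control of a quadratic form on sign-changing (let alone complex-valued) test functions. A decisive objection: the pointwise inequality $|x-\Theta_H(y)|\ge|x-y|$ holds for \emph{every} $\lambda\in(0,N)$, yet the positivity fails for $0<\lambda<N-2$ when $N\ge 3$ --- see Remark \ref{counter}. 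So any argument in which the restriction $\lambda\ge N-2$ plays no role cannot be correct. The same objection invalidates the strictness argument: ``$|x-\Theta_H(y)|>|x-y|$ a.e.\ forces $H\equiv 0$'' again implicitly treats $h$ as nonnegative. In fact, for $\lambda=N-2$ the cross term can vanish for nonzero $\Theta_H$-odd $h$ (Example \ref{countercou}), and the genuine strictness for $\lambda>N-2$ requires the injectivity of a Laplace transform, not a pointwise kernel comparison.

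Your hedge toward computing ``the explicit sign of a one-dimensional partial Fourier transform of $|\xi|^{-(N-\lambda)}$'' is the correct route and is exactly what the paper's Lemma \ref{repr} carries out: after a partial Fourier transform in the tangential variables, the one-dimensional kernel $k_{\lambda,\xi'}(t)=\int_\R e^{i\xi_N t}(|\xi'|^2+\xi_N^2)^{-(N-\lambda)/2}\,d\xi_N$ is rewritten via contour deformation as a Laplace transform of a measure supported on $[|\xi'|,\infty)$, and that measure is nonnegative precisely when $N-\lambda\le 2$ --- this is the step where the hypothesis enters and where strictness (for $\lambda>N-2$) is extracted via the injectivity of the Laplace transform. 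As written, your proposal stops short of this essential computation, so the central claim of the theorem remains unproved.
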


For half-spaces and $\lambda=N-2$ (the Newtonian case) this theorem was long known to quantum field theorists \cite{OsSc,Li0,GlJa0,GlJa}. In this case, reflection positivity is equivalent to the assertion that, as operators, the Dirichlet Laplacian is bigger than the Neumann Laplacian on the half-space. The half-space case with $N-2<\lambda<N$ (but not the strictness for $\lambda>N-2$) was apparently first proved by Lopes and Mari\c{s} \cite{LoMa}. The case of balls seems to be new for all $\lambda$.

\begin{remark}
 The restriction $\lambda\geq N-2$ for $N\geq 3$ is necessary for \eqref{eq:infrefpos} to hold. Indeed, for $0<\lambda<N-2$ the quantity $\frac12 \left(I_\lambda[f^i]+I_\lambda[f^o]\right) -I_\lambda[f]$ can attain both positive and negative values for $f\in L^{2N/(2N-\lambda)}(\R^N)$, see Remark \ref{counter}. Moreover, for $\lambda=N-2$ it can vanish without having $f=\Theta_B f$, see Example \ref{countercou}.
\end{remark}

Our second main ingredient is a generalization of Li and Zhu's theorem \cite{LZh}; see also \cite{L}.

\begin{theorem}[Characterization of inversion invariant measures]\label{yyl}
 Let $\mu$ be a finite, non-negative measure on $\R^N$. Assume that 
\begin{itemize}
 \item[(A)] \label{ass:yyl}
for any $a\in\R^N$ there is an open ball $B$ centered at $a$ and for any $e\in\Sph^{N-1}$ there is an open half-space $H$ with interior unit normal $e$ such that
\begin{equation}
 \label{eq:yyl}
\mu(\Theta^{-1}_B(A))=\mu(\Theta^{-1}_H(A))=\mu(A)
\qquad \text{for any Borel set}\ A\subset\R^N \,.
\end{equation}
\end{itemize}
Then $\mu$ is absolutely continuous with respect to Lebesgue measure and
$$
d\mu(x) = \alpha \left(\beta +|x-y|^2\right)^{-N} dx
$$
for some $\alpha\geq 0$, $\beta>0$ and $y\in\R^N$.
\end{theorem}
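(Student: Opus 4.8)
The plan is to reduce the statement to a rigidity fact about inversions and then to identify the density explicitly. First I would fix an arbitrary point $a\in\R^N$ and consider the ball $B=B(a,r)$ guaranteed by assumption (A). The invariance $\mu(\Theta_B^{-1}(A))=\mu(A)$, combined with the Jacobian computation $dz=(r/|x-a|)^{2N}\,dx$ already recorded in the excerpt, forces $\mu$ to have no atom at $a$ (an atom would be mapped to a point at infinity) and, more importantly, constrains the local behavior of $\mu$ near $a$. The key elementary observation is that composing two such inversions (through spheres around two different centers, or an inversion through a sphere with a reflection in a hyperplane) produces a conformal transformation of $\R^N$, and by varying $a$ over all of $\R^N$ and $e$ over all of $\Sph^{N-1}$ one generates a group acting transitively with enough freedom to pin down $\mu$. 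So the first block of the argument is: show $\mu$ cannot charge any sphere or hyperplane (else applying the inversion centered at a point of that set would create infinite mass), hence $\mu$ assigns zero mass to every set of Lebesgue measure zero that is a countable union of such surfaces — and then bootstrap to full absolute continuity.

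Second, I would establish absolute continuity properly. Since $\mu$ is finite and non-negative, write its Lebesgue decomposition $\mu=\mu_{ac}+\mu_{s}$. The invariance relations are preserved under this decomposition only if I can argue the singular part is itself invariant; the cleanest route is a Besicovitch/Lebesgue differentiation argument: for $\mu$-a.e. $x$ the density $\lim_{r\to0}\mu(B(x,r))/|B(x,r)|$ exists in $[0,\infty]$, and the inversion invariance around $x$ transports small balls near $x$ to small balls near $\Theta_B(x)$ with a controlled conformal factor, forcing the upper and lower densities to be finite and comparable everywhere — which kills the singular part. I expect this is where one must be careful, because the inversion $\Theta_B$ does not map balls to balls exactly; one uses instead that it maps balls to balls up to bounded distortion, which suffices for the density comparison.

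Third, once $\mu=h(x)\,dx$ with $h\in L^1(\R^N)$, $h\geq0$, the functional equation becomes pointwise: for each $a$ there is $r=r(a)$ with
$$
h(x) = \left(\frac{r}{|x-a|}\right)^{2N} h\!\left(\frac{r^2(x-a)}{|x-a|^2}+a\right)
\qquad\text{for a.e. }x,
$$
and similarly $h$ is invariant under one reflection per direction $e$. Taking logarithms (on the open set where $h>0$, which one first shows is all of $\R^N$ by the transitivity of the generated conformal group) and using the hyperplane reflections shows $h>0$ everywhere and that $u:=h^{1/(2N)}$... more precisely, setting $v(x):=h(x)$ one analyzes $H(x):=\int h$ or works directly: the sphere-inversion identity is exactly the statement that $h(x)\,dx$ is a conformal density of weight $2N$ invariant under $\Theta_B$. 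The standard move is to differentiate the family of constraints in $a$ and $r$ to obtain a PDE; alternatively, and more in the spirit of Li–Zhu, one uses the ``moving sphere'' calculus lemma stating that if a positive function satisfies $u(x)=(r/|x-a|)^{2N}u(\Theta_{B(a,r)}(x))$ for every $a$ with a suitable $r(a)$, then $u(x)=\alpha(\beta+|x-y|^2)^{-N}$. I would invoke (and if needed reprove) that lemma, deriving the explicit form; the reflection constraints then fix that the level sets are concentric spheres, giving the center $y$, while two applications in different directions fix $\beta$ and $\alpha$.

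The main obstacle, I expect, is the passage from the set-theoretic invariance of the measure to genuine absolute continuity with a locally bounded density — i.e., controlling the singular part and the zero set of the density simultaneously. Everything after that (the ODE/algebra identifying $(\beta+|x-y|^2)^{-N}$) is the Li–Zhu computation, routine once $h$ is known to be a positive measurable function satisfying the pointwise functional equations; the novelty here is purely in handling the measure-theoretic generality, so that is where the real work lies.
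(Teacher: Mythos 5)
Your high-level plan — prove absolute continuity first, then identify the density via a Li--Zhu-type argument — is the correct outline, and you correctly flag absolute continuity as the crux. But the implementation you sketch for each of the two main steps has a genuine gap, and the paper does something substantially different in both places.

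On absolute continuity: the Besicovitch/Lebesgue-differentiation comparison you describe does not go through as stated. Assumption (A) hands you exactly \emph{one} inversion per center $a$ (and one reflection per normal $e$), not a family; transporting a small ball near $x$ to a small ball near $\Theta_B(x)$ with bounded distortion only compares densities at two specific, rigidly linked points, and to ``kill'' a singular part you would need to move it to essentially arbitrary locations, i.e.\ transitivity of the generated group with uniform distortion bounds — which is far from immediate and is not what the paper does. Instead, the paper establishes two global structural facts: that $\mu$ is \emph{radial} (Step~1, from the hyperplane reflections) and \emph{decreasing} in a suitable ball sense (Step~3, from Corollary~\ref{exballscor}), and then proves a clean general lemma (Lemma~\ref{ac}) that any radial, decreasing, finite measure without an atom at the origin must be absolutely continuous; that lemma does use Besicovitch, but coupled with a sphere-packing argument driven by radiality and monotonicity, which is what makes the singular part incompatible with finiteness. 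That monotonicity step in turn requires Lemma~\ref{exballs} and Corollary~\ref{exballscor} about existence and continuity of hemi-balls, which are the technical heart of how ``one inversion per center'' is leveraged into real freedom.

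On identification of the density: once $\mu = v\,dx$, you propose to invoke the Li--Zhu moving-sphere calculus lemma directly. But that lemma is stated for (at least) continuous functions, and its proof via the Taylor expansion at infinity (as in Step~6 of the paper) needs $v$ to be $C^1$. Going from an $L^1$ density satisfying the pointwise functional equation \eqref{eq:pweq} almost everywhere to a continuous, then differentiable, function is precisely the extension beyond Li--Zhu that the paper advertises and carries out in Steps~4 and 5 — again exploiting radial symmetry and monotonicity to show $v$ is symmetric decreasing (hence has one-sided limits), then using the hemi-ball inversions through a given sphere to match inner and outer radial limits, and similarly to compute $\partial_r v(x) = -Nv(x)/|x-a|$. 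Your phrase ``routine once $h$ is known to be a positive measurable function'' misplaces where the work is: the ODE at the end is indeed routine, but the regularity upgrade is not.

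So: the skeleton of your proposal matches the paper's, but both load-bearing steps (absolute continuity and regularity of the density) would need to be replaced by the paper's arguments — radial symmetry and monotonicity derived from hemi-balls/hemi-spaces, a packing argument for absolute continuity, and a two-step regularity bootstrap before the Taylor/ODE endgame.
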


We emphasize that $B$ and $H$ in assumption (A) divide $\mu$ in half, in the sense that $\mu(B)=\mu(\R^N\setminus\overline B)$ and $\mu(H)=\mu(\R^N\setminus\overline H)$. Moreover, by a change of variables one finds that for absolutely continuous measures $d\mu=v \,dx$ assumption (A) is equivalent to the fact that for any $a\in\R^N$ there is an $r_a>0$ and a set of full measure in $\R^N$ such that for any $x$ in this set
\begin{equation}\label{eq:pweq}
v(x) = \left(\frac{r_a}{|x-a|}\right)^{2N} v\left( \frac{r_a^2 (x-a)}{|x-a|^2}+a \right) \,,
\end{equation}
and similarly for reflections.

\begin{remark}
 The assumption that $\mu$ is finite is essential, since $d\mu(x)=|x|^{-2N}dx$ also satisfies assumption (A).
\end{remark}

We now show how Theorem \ref{mainhls} follows from Theorems \ref{infrefpos} and \ref{yyl}.

\begin{proof}
 The kernel
$$
|x-y|^{-\lambda} = \const \int_{\R^N} |x-z|^{-(\lambda+N)/2} |y-z|^{-(\lambda+N)/2} \,dz \,, 
$$
is positive definite and therefore we take $g=f$ henceforth. Let $f$ be an optimizer, that is, a non-trivial function $f\in L^p(\R^N)$, $p=2N/(2N-\lambda)$, for which the supremum
$$
\mathcal H_{N,\lambda,p,p} = \sup\left\{ \frac{I_\lambda[h]}{\|h\|_p^2} :\ 0\not\equiv h \in L^p(\R^N) \right\}
$$
is attained. The existence of such a function was shown, e.g., in \cite{Li}. (An alternative proof in \cite{Ln} does not use the technique of symmetric decreasing rearrangements.) Of course, we may assume that $f\geq 0$. For any point $a$ there is a ball $B$ centered at $a$ such that $\int_{B} f^p \,dx = \int_{\R^N\setminus B} f^p \,dx$. We note that if $f^i$ and $f^o$ are defined as in Theorem~\ref{infrefpos} then $\|f^i\|_p=\|f^o\|_p=\|f\|_p$. Moreover, by \eqref{eq:infrefpos}, $\tfrac12( I_\lambda[f^i]+I_\lambda[f^o] )\geq I_\lambda[f]$ and hence, in particular, $\max\{I_\lambda[f^i], I_\lambda[f^o]\} \geq I_\lambda[f]$. By the maximizing property of $f$ this inequality cannot be strict, and therefore we conclude that $I_\lambda[f^i]= I_\lambda[f^o]= I_\lambda[f]$, that is, both $f^i$ and $f^o$ are optimizers as well.

 In order to continue the argument we assume first that either $N=1,2$ or else that $N\geq 3$ and $\lambda>N-2$. Since we have just shown that one has equality in \eqref{eq:infrefpos}, the second part of Theorem \ref{infrefpos} implies that $f=\Theta_{B} f$. By a similar argument one deduces that $f=\Theta_H f$ for any half-space such that $\int_H f^p \,dx = \int_{\R^N\setminus H} f^p \,dx$. Therefore the measure $f^p\,dx$ satisfies the assumption of Theorem \ref{yyl}, and hence $f$ has the form claimed in Theorem \ref{mainhls}. The value of the $\mathcal H_{N,\lambda,p,p}$ is found by explicit calculation, e.g., via stereographic projection; see \cite{Li}.

Now assume that $N\geq 3$ and $\lambda=N-2$. The difference from the previous case is that there is no strictness assertion in Theorem \ref{infrefpos} (indeed, equality in \eqref{eq:infrefpos} can hold without $f=\Theta_B f$), so we need an additional argument to conclude that $f=\Theta_B f$ for any ball and half-space with $\int_B f^p \,dx = \int_{\R^N \setminus B} f^p \,dx$. This argument is in the spirit of \cite{Lo,LoMa}. We have already proved that $f^o$ (and $f^i$) are optimizers. The corresponding Euler-Lagrange equations are
$$
\int_{\R^N} \frac{f(y)}{|x-y|^{N-2}} \,dy = \mu \ f^{p-1}(x) \,,
\qquad
\int_{\R^N} \frac{f^o(y)}{|x-y|^{N-2}} \,dy = \mu \ (f^o)^{p-1}(x) \,,
$$
where the Lagrange multipliers coincide since $I_{N-2}[f]=I_{N-2}[f^o]$ and $\|f^o\|_p=\|f\|_p$. Define $u:=f^{p-1}$ and $u^o := (f^o)^{p-1}$. Then
$$
-\Delta u = \tilde\mu \ u^{p'-1} \,,
\qquad
-\Delta u^o = \tilde\mu \ (u^o)^{p'-1}
$$
where $\tilde\mu:=\mu^{-1} (N-2)|\Sph^{N-1}|$. The function $w:=u-u^o$ satisfies $-\Delta w+Vw=0$ with
$$
V(x):= -\tilde \mu \frac{u^{p'-1}(x) - (u^o)^{p'-1}(x)}{u(x)-u^o(x)} 
= -\tilde \mu (p'-1) \int_0^1 \left(t u(x)+ (1-t)u^o(x)\right)^{p'-2} \,dt \,.
$$
Note that $w\equiv 0$ in $\R^N\setminus B$. Using the unique continuation theorem from \cite{JeKe} we are going to deduce that $w\equiv 0$ everywhere, and hence $f=f^o$. In order to verify the assumptions of \cite{JeKe} we note that $u=f^{p-1}\in L^{p'}(\R^N)$ and similarly for $u^o$. From this one easily deduces that $V\in L^{N/2}(\R^N)$. Moreover, $-\Delta w= \tilde\mu \left( f-f^o\right) \in L^{p}(\R^N)$. Under these conditions the argument in \cite{JeKe} implies that $w\equiv 0$. Hence $f=\Theta_Bf$ and we can deduce Theorem \ref{mainhls} again from Theorem \ref{yyl}.
\end{proof}

\subsection*{Acknowledgement}
We are grateful to E. Carlen for pointing out that the conformal invariance of the HLS functional and the conventional reflection positivity through planes imply the inversion positivity through spheres. This allows us to circumvent our original, direct but complicated proof, which uses properties of Gegenbauer polynomials.


\section{Reflection and inversion positivity}\label{sec:pos}

Our goal in this section is to prove Theorem \ref{infrefpos}. In Subsection \ref{sec:repr} we consider the case of half-spaces and we shall derive a representation formula for $I_\lambda[\Theta_H f, f]$. In Subsection \ref{sec:cayley} we show how the case of balls can be reduced to the case of half-spaces, and in Subsection \ref{sec:infrefposproof} we give the proof of Theorem \ref{infrefpos}.


\subsection{Reflection positivity}
\label{sec:repr}

Throughout this subsection we assume that $H=\{x\in\R^N:\ x_N> 0 \}$. The key for proving Theorem \ref{infrefpos} is the following explicit formula for $I_\lambda[\Theta_H f,f]$.

\begin{lemma}[Representation formula]
 \label{repr}
 Let $0<\lambda<N$ if $N=1,2$ and $N-2\leq \lambda<N$ if $N\geq 3$. Let $f\in L^{2N/(2N-\lambda)}(\R^N)$ be a function with support in $\overline H = \{x\in\R^N:\ x_N\geq 0 \}$. If $\lambda>N-2$, then
\begin{equation}
 \label{eq:repr}
I_\lambda[\Theta_H f,f] = c_{N,\lambda} 
\int_{\R^{N-1}} d\xi'
\int_{|\xi'|}^\infty d\tau \frac{\tau^2}{(\tau^2-|\xi'|^2)^{(N-\lambda)/2}}
\left| \int_\R \frac{\hat f(\xi)} {\tau^2+\xi_N^2} \,d\xi_N \right|^2
\end{equation}
where
$$
c_{N,\lambda} =  2^{N+1-\lambda} \pi^{(N-4)/2} \, \frac{\sin(\pi(N-\lambda)/2) \ \Gamma((N-\lambda)/2)}{\Gamma(\lambda/2)} >0 \,.
$$
If $\lambda=N-2$, then
\begin{equation}
 \label{eq:reprcou}
I_{N-2}[\Theta_H f,f] = \frac{4 \pi^{(N-2)/2}}{\Gamma((N-2)/2)} \ 
\int_{\R^{N-1}} d\xi' |\xi'|
\left| \int_\R \frac{\hat f(\xi)} {|\xi'|^2+\xi_N^2} \,d\xi_N \right|^2 \,.
\end{equation}
\end{lemma}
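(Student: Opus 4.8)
The plan is to compute the double integral defining $I_\lambda[\Theta_H f,f]$ on the Fourier side, exploiting that the operator $\Theta_H$ (reflection in the hyperplane $\{x_N=0\}$) together with the Riesz kernel $|x-y|^{-\lambda}$ produces, after the reflection is performed, a convolution kernel evaluated only at points with positive last coordinate. Concretely, since $\supp f\subset\overline H$, the expression $I_\lambda[\Theta_H f,f]=\iint \overline{(\Theta_H f)(x)}\,|x-y|^{-\lambda} f(y)\,dx\,dy$ becomes, after the change of variable $x\mapsto\Theta_H(x)=(x',-x_N)$, an integral of $\overline{f(x)} f(y)$ against the kernel $K(x,y):=|(x'-y',x_N+y_N)|^{-\lambda}$ over $x,y\in\overline H$. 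Thus everything reduces to understanding the quadratic form with kernel $\bigl(|x'-y'|^2+(x_N+y_N)^2\bigr)^{-\lambda/2}$ on the half-space.

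First I would write the Riesz kernel as a one-parameter superposition of Gaussians (or, equivalently, use the known Fourier transform of $|z|^{-\lambda}$ on $\R^N$, which is $\gamma_{N,\lambda}|\zeta|^{-(N-\lambda)}$), and take the partial Fourier transform in the tangential variables $x'\in\R^{N-1}$. The point is that $K(x,y)$ depends on $x'-y'$, so its tangential Fourier transform is a function of the dual variable $\xi'\in\R^{N-1}$ and of $x_N+y_N$; explicitly one needs the $(N-1)$-dimensional Fourier transform of $(|x'|^2+s^2)^{-\lambda/2}$, which is a Bessel-type kernel of the form $c\,|\xi'|^{(N-1-\lambda)/2}\,s^{-(\ldots)}\,K_\nu(|\xi'|s)$ — but rather than carry the Macdonald function around I would instead diagonalize in the $x_N$ variable too. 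The cleanest route: represent $\bigl(|x'-y'|^2+(x_N+y_N)^2\bigr)^{-\lambda/2}$ via the subordination formula
\begin{equation*}
t^{-\lambda/2}=\frac{1}{\Gamma(\lambda/2)}\int_0^\infty s^{\lambda/2-1}e^{-st}\,ds,
\end{equation*}
apply it with $t=|x'-y'|^2+(x_N+y_N)^2$, do the Gaussian tangential Fourier transform, and then recognize the $x_N,y_N$ Gaussian cross term $e^{-s(x_N+y_N)^2}$ and integrate against the $1/(\tau^2+\xi_N^2)$-type resolvent kernels. Carrying out the $s$-integral and the Gaussian integrals should collapse everything into the stated $\tau$-integral with weight $\tau^2(\tau^2-|\xi'|^2)^{-(N-\lambda)/2}$, the factor $\tau^2$ and the denominator $\tau^2+\xi_N^2$ coming from the elementary identity that the half-line Laplace/Fourier transform of $e^{-s x_N^2}$ produces, via $\int_0^\infty e^{-s\xi_N^2}\cos(\tau\xi_N)\,ds$-type manipulations, a Poisson/Lorentzian kernel; equivalently one uses that $\frac{\tau}{\pi}\frac{1}{\tau^2+\xi_N^2}$ is the Fourier transform of $e^{-\tau|x_N|}$ and that functions supported on $x_N\ge0$ have a definite structure under this. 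The constant $c_{N,\lambda}$ is then assembled from $\Gamma(\lambda/2)^{-1}$, the Gaussian normalizations $\pi^{(N-1)/2}$ (tangential) and $\pi^{1/2}$-type factors, and a Beta-function integral producing $\Gamma((N-\lambda)/2)$ and the $\sin(\pi(N-\lambda)/2)$ from a reflection-formula rearrangement (e.g. $\Gamma((N-\lambda)/2)\Gamma(1-(N-\lambda)/2)=\pi/\sin(\pi(N-\lambda)/2)$).

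The case $\lambda=N-2$ is the boundary value: there the weight $(\tau^2-|\xi'|^2)^{-(N-\lambda)/2}=(\tau^2-|\xi'|^2)^{-1}$ becomes non-integrable at $\tau=|\xi'|$, and in the limit $\lambda\uparrow N-2$ is irrelevant — rather, the correct statement is that the whole $\tau$-integral degenerates (the measure $c_{N,\lambda}\,\tau^2(\tau^2-|\xi'|^2)^{(\lambda-N)/2}\,d\tau$ concentrates, as $\lambda\to N-2$, into a point mass at $\tau=|\xi'|$ of the appropriate weight). I would either (i) redo the computation directly at $\lambda=N-2$, where the subordination exponent $\lambda/2-1=(N-4)/2$ and the Gaussian integrals conspire to give a single boundary term — using that $|z|^{-(N-2)}$ has Fourier transform $c|\zeta|^{-2}$, so the kernel on the half-space is built from the half-space Green's function of $-\Delta$, and the reflected version $(\Theta_H f,f)$ is literally $\langle f,(-\Delta)^{-1}_{\text{half-space, reflected}}f\rangle$ which one computes by partial Fourier transform to get $\int|\xi'|^{-1}$-type expression $\frac{2\pi^{(N-2)/2}}{\Gamma((N-2)/2)}\cdot$ something — wait, the stated constant is $4\pi^{(N-2)/2}/\Gamma((N-2)/2)$, which I would obtain by noting that the Fourier transform of $|z|^{-(N-2)}$ in $\R^N$ is $\frac{(4\pi)^{(N-2)/2}\Gamma((N-2)/2)}{?}|\zeta|^{-2}$ and tracking the $\int d\xi_N/(|\xi'|^2+\xi_N^2)=\pi/|\xi'|$ residue, then multiplying by the boundary-doubling factor $2$ that comes from $x_N+y_N$ (reflection) versus $x_N-y_N$); or (ii) take the limit $\lambda\to(N-2)^+$ in \eqref{eq:repr}, using $\sin(\pi(N-\lambda)/2)/(\tau^2-|\xi'|^2)^{(N-\lambda)/2}\to\pi\,\delta(\tau-|\xi'|)/(2|\xi'|)$ in the appropriate weak sense. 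I expect the main obstacle to be neither conceptual nor the final assembly but rather the careful \emph{justification of the interchanges of integration} (Fubini for the $s$-, $\xi'$-, $\xi_N$-, and $x_N,y_N$-integrals) given only $f\in L^{2N/(2N-\lambda)}$: one must check that $\hat f(\xi',\cdot)$ is regular enough in $\xi_N$ for the inner integral $\int_\R\hat f(\xi)/(\tau^2+\xi_N^2)\,d\xi_N$ to converge absolutely and for the outer $\tau$- and $\xi'$-integrals to make sense — this is where the conditions $\lambda>N-2$ (ensuring $\tau^2(\tau^2-|\xi'|^2)^{(\lambda-N)/2}$ is locally integrable in $\tau$) and $\lambda<N$ (ensuring integrability at $\tau=\infty$, together with the decay of the inner integral) are used, and one likely first proves \eqref{eq:repr} for $f$ smooth with compact support in $\overline H$ and then extends by density in $L^{2N/(2N-\lambda)}$, using the HLS inequality \eqref{eq:hls1} itself to control $I_\lambda[\Theta_H f,f]$ and dominated convergence on the right-hand side.
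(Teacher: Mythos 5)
Your overall plan — reduce $I_\lambda[\Theta_H f,f]$ to a quadratic form with kernel $(|x'-y'|^2+(x_N+y_N)^2)^{-\lambda/2}$ on the half-space, diagonalize in the tangential variables by Fourier transform, and exploit that functions supported in $\{x_N\ge 0\}$ have Laplace transforms expressible as $\int_\R \hat f(\xi)/(\tau^2+\xi_N^2)\,d\xi_N$ (your Lorentzian/Poisson kernel remark, which is exactly the paper's identity \eqref{eq:cauchy}) — matches the paper. But there is a real gap in the middle, precisely where the formula's specific structure has to emerge. After the tangential Fourier transform one is left with the one-dimensional kernel
\begin{equation*}
k_{\lambda,\xi'}(t) \;=\; \int_\R \frac{e^{i\xi_N t}}{\bigl(|\xi'|^2+\xi_N^2\bigr)^{(N-\lambda)/2}}\,d\xi_N ,
\end{equation*}
and the entire lemma hinges on representing this as a Laplace transform of an explicit \emph{nonnegative} measure on $[|\xi'|,\infty)$, namely
\begin{equation*}
k_{\lambda,\xi'}(t) \;=\; 2\sin\!\bigl(\tfrac{\pi}{2}(N-\lambda)\bigr)\int_{|\xi'|}^\infty \frac{e^{-\tau t}}{(\tau^2-|\xi'|^2)^{(N-\lambda)/2}}\,d\tau
\qquad(N-2<\lambda<N).
\end{equation*}
The paper obtains this in one stroke by a contour deformation: the integrand is analytic in the upper half $\xi_N$-plane off the branch cut $\{i\tau:\tau\ge|\xi'|\}$, and pushing the contour onto the cut produces the $\sin(\pi(N-\lambda)/2)$ jump factor and the $(\tau^2-|\xi'|^2)^{-(N-\lambda)/2}$ density; the constraint $\lambda>N-2$ is exactly what makes that density integrable at $\tau=|\xi'|$. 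For $\lambda=N-2$ the power is $1$ and the contour deformation degenerates to a residue at the simple pole $\xi_N=i|\xi'|$, giving \eqref{eq:reprcou} directly.

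Your proposal deliberately avoids both the Macdonald-function identity and (implicitly) the contour argument, asserting instead that Gaussian subordination plus ``Gaussian integrals should collapse'' into the stated $\tau$-weight. As written this does not deliver the weight: Gaussian subordination of the full kernel turns the tangential FT into $\int_0^\infty s^{\lambda/2-1-(N-1)/2}e^{-|\xi'|^2/(4s)}e^{-s(x_N+y_N)^2}\,ds$, which \emph{is} a Macdonald function $K_{(N-\lambda)/2}(|\xi'|(x_N+y_N))$ times a power, and from there you would still need a Laplace-transform representation of $K_\nu$ (the classical $K_\nu(z)=\frac{\sqrt\pi}{\Gamma(\nu+1/2)}(z/2)^\nu\int_1^\infty e^{-zu}(u^2-1)^{\nu-1/2}\,du$) to produce the $(\tau^2-|\xi'|^2)$-density; the Gaussian $e^{-su^2}$ alone is not completely monotone, so positivity cannot be extracted termwise in $s$. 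In short, you need one of the two mechanisms — the contour/branch-cut computation (what the paper does) or the $K_\nu$ Laplace representation (what you were trying to avoid) — and the proposal supplies neither. Once one of these is added, the rest of your outline (partial Fourier transform, the identity $\int_0^\infty e^{-\tau t}\phi\,dt=\sqrt{2/\pi}\,\tau\int_\R\hat\phi(\xi)/(\xi^2+\tau^2)\,d\xi$, density/continuity to pass from nice $f$ to $L^{2N/(2N-\lambda)}$) does line up with the paper.
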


When $N=1$, we use the convention that $\R^{N-1}=\{0\}$ and that $d\xi'$ gives measure $1$ to this point. Note that
by the invariance \eqref{eq:confinvh} the left side of \eqref{eq:repr} is real-valued for any (possibly complex-valued) $f$.

The crucial point of Lemma \ref{repr} is, of course, that the right sides of \eqref{eq:repr} and \eqref{eq:reprcou} are non-negative. This is no longer the case for $0<\lambda<N-2$ if $N\geq 3$, see Remark~\ref{counter} below.

Formula \eqref{eq:reprcou} and its proof are well-known and our proof of \eqref{eq:repr} follows the same strategy. An essentially equivalent form of \eqref{eq:repr} has recently appeared in \cite{LoMa} with a different proof. 

\begin{proof}
 If $N=1$ we have
$$
I_\lambda[\Theta_H f,f] = \int_0^\infty \int_0^\infty \frac{\overline{f(x)}\ f(y)}{(x+y)^{\lambda}} \,dx\,dy 
= \frac1{\Gamma(\lambda)} \int_0^\infty \frac{d\tau}{\tau^{1-\lambda}} \left| \int_0^\infty e^{-\tau x} f(x) \,dx \right|^2 \,.
$$
Recalling that $f(x)=0$ for $x\leq 0$ and using that $e^{-\tau|\cdot|}$ has Fourier transform $(2/\pi)^{1/2} \tau/(\xi^2+\tau^2)$ we can write
\begin{equation}
 \label{eq:cauchy}
\int_0^\infty e^{-\tau x} f(x) \,dx = \sqrt\frac2\pi \ \tau \int_\R \frac{\hat f(\xi)}{\xi^2+\tau^2} \,d\xi \,.
\end{equation}
Noting that $c_{1,\lambda}= 2/(\pi \Gamma(\lambda))$ we arrive at the assertion for $N=1$.

For $N\geq 2$ the functional is
$$
I_\lambda[\Theta_H f,f] = \int_H \int_H \frac{\overline{f(x)}\ f(y)}{(|x'-y'|^2+(x_N+y_N)^2)^{\lambda/2}} \,dx\,dy \,.
$$
Using the Fourier transform of $|x|^{-\lambda}$ (see, e.g., \cite[Thm. 5.9]{LiLo} where, however, another normalization is used) we can rewrite this as
\begin{align*}
 I_\lambda[\Theta_H f,f] & = (2\pi)^{-N+1} \tilde c_{N,\lambda} \int_{\R^N}  
\int_H \int_H \overline{f(x)} \ \frac{e^{i\xi'\cdot(x'-y') + i\xi_N(x_N+y_N)}}{|\xi|^{N-\lambda}} f(y) \,dx\,dy\,d\xi \\
& = \tilde c_{N,\lambda} \int_{\R^{N-1}} J_{\lambda,\xi'}[F_{\xi'}] \,d\xi' \,,
\end{align*}
where $\tilde c_{N,\lambda}=2^{N-1-\lambda} \pi^{(N-2)/2} \Gamma((N-\lambda)/2)/ \Gamma(\lambda/2)$,
$$
F_{\xi'}(t) := (2\pi)^{-(N-1)/2} \int_{\R^{N-1}} f(x',t) e^{-i\xi'\cdot x'} \,dx' \,,
$$
and
$$
J_{\lambda,\xi'}[\phi] = \!\int_0^\infty \!\!\int_0^\infty \!\overline{\phi(t)} k_{\lambda,\xi'}(t+s) \phi(s) \,ds\,dt \,,
\quad
k_{\lambda,\xi'}(t) := \!\int_\R \frac{e^{i\xi_N t} }{(|\xi'|^2+\xi_N^2)^{(N-\lambda)/2}} \,d\xi_N \,.
$$
Note that for $\xi'\neq0$, $k_{\lambda,\xi'}$ converges absolutely if $N-2\leq \lambda< N-1$ and as an improper Riemann integral (that is, $\lim_{R\to\infty} \int_{-R}^R$ ) if $N-1\leq\lambda<N$.

Using complex analysis we shall write $k_{\lambda,\xi'}$ as the Laplace transform of a positive measure. First, assume that $N\geq 3$ and $\lambda=N-2$. Then by the residue theorem
$$
k_{\lambda,\xi'}(t) = \pi |\xi'|^{-1} e^{-t|\xi'|} \,,
$$
and hence
$$
J_{N-2,\xi'}[\phi] = \pi |\xi'|^{-1} \left| \int_0^\infty \! e^{-t|\xi'|} \phi(t) \,dt \right|^2 \,.
$$
In view of \eqref{eq:cauchy} this is the claimed formula. Now let $N\geq 2$ and $N-2<\lambda<N$. We observe that for fixed $t$ and $\xi'$, the function $e^{i\xi_N t} (|\xi'|^2+\xi_N^2)^{-(N-\lambda)/2}$ of $\xi_N$ is analytic in the upper halfplane with the cut $\{ i\tau :\ \tau\geq |\xi'| \}$ removed. Deforming the contour of integration to this cut and calculating the jump of the argument along it we obtain
$$
k_{\lambda,\xi'}(t)
= \int_\R \frac{e^{i\xi_N t} }{(|\xi'|^2+\xi_N^2)^{(N-\lambda)/2}} \,d\xi_N
= 2 \sin\left(\tfrac\pi 2 (N-\lambda)\right) \int_{|\xi'|}^\infty \frac{e^{-\tau  t}}{(\tau^2-|\xi'|^2)^{(N-\lambda)/2}} \,d\tau \,.
$$
Hence we find
$$
J_{\lambda,\xi'}[\phi] = 2 \sin\left(\tfrac\pi 2 (N-\lambda)\right)
\int_{|\xi'|}^\infty \frac{d\tau}{(\tau^2-|\xi'|^2)^{(N-\lambda)/2}}
\left| \int_0^\infty \! e^{-\tau t} \phi(t) \,dt \right|^2 \,.
$$
Using again \eqref{eq:cauchy} we obtain the assertion.
\end{proof}

\begin{remark}
 \label{finiteenrem}
Lemma \ref{repr} remains valid for $f\in\dot H^{-(N-\lambda)/2}(\R^N)$. More precisely, for $f\in L^{2N/(2N-\lambda)}(\R^N)$ one has $\hat f\in L^{2N/\lambda}(\R^N)$ by Hausdorff-Young and
\begin{equation}\label{eq:finiteenrem}
I_\lambda[f] = a_{\lambda,N} \int_{\R^N} |\xi|^{-N+\lambda} |\hat f(\xi)|^2 \,d\xi
\end{equation}
with some constant $a_{\lambda,N}>0$. This is finite as long as $|\xi|^{-(N-\lambda)/2} \hat f\in L^2(\R^N)$, i.e., $f\in\dot H^{-(N-\lambda)/2}(\R^N)$. Note that such $f$ could be distributions that are not functions. For $f\in\dot H^{-(N-\lambda)/2}(\R^N)$, $\Theta_H f$ can be defined by duality, and \eqref{eq:confinvh} remains valid. The quantity $I[\Theta_H f,f]$ is defined by \eqref{eq:finiteenrem} and polarization. Since $I_\lambda[\Theta_H f,f] \leq I_\lambda[f]^{1/2} I_\lambda[\Theta_H f]^{1/2} = I_\lambda[f]$, formulas \eqref{eq:repr} and \eqref{eq:reprcou} extend by continuity to all $f\in\dot H^{-(N-\lambda)/2}(\R^N)$.
\end{remark}

As we have already pointed out, what is crucial for us is that the right sides of \eqref{eq:repr} and \eqref{eq:reprcou} are non-negative. Indeed, in Subsection \ref{sec:infrefposproof} we shall see that the right side of \eqref{eq:repr} is strictly positive unless $f\equiv 0$. This is not true for \eqref{eq:reprcou}, as the following counterexample shows.

\begin{example}
 \label{countercou}
Let $N\geq 3$. Let $f\in L^{2N/(N+2)}(\R^N)$ be radially symmetric around a point $a\in\R^N$ with $a_N>0$, let $f$ have support in $H=\{x:\ x_N>0\}$ and assume that $\int_{\R^N} f(x)\,dx = 0$. Then by Newton's theorem 
$$
\int_{\R^N} |x-y|^{-N+2} f(y) \,dy = 0
\qquad\text{if $x$ is outside the convex hull of}\ \supp f \,.
$$
In particular, the integral vanishes for $x\in\supp\Theta_H f$ and therefore $I_{N-2}[\Theta_H f,f]=0$.
\end{example}

\begin{remark}
 \label{counter}
Let $N\geq 3$ and $0<\lambda<N-2$. We claim that $I_\lambda[\Theta_H f,f]$ assumes both positive and negative values for functions $f\in L^{2N/(2N-\lambda)}(\R^N)$ with support in $\overline H$. Indeed, one still has
\begin{align*}
 I_\lambda[\Theta_H f,f] = \tilde c_{N,\lambda} \int_{\R^{N-1}} J_{\lambda,\xi'}[g_{\xi'}] \,d\xi' \,,
\end{align*}
with $J_{\lambda,\xi'}$ as in the proof of Lemma \ref{repr}. By letting $f$ approach a function of the form $e^{i\xi'\cdot x'}\phi(x_N)$ we see that $I_\lambda[\Theta_H f,f]$ can only be positive (or negative) semi-definite if $J_{\lambda,\xi'}[\phi]$ is so for any $\xi'$. This is equivalent to the kernel $k_{\lambda,\xi'}$ of $J_{\lambda,\xi'}$ being the Laplace transform of a non-negative (or non-positive) measure supported on $[0,\infty)$; see \cite[Prop. 3.2]{FrIsLiSi} for a discrete version of this equivalence assertion. But for $0<\lambda<N-2$,
$$
\frac{d}{dt} k_{\lambda,\xi'}(0) = i \int_\R \frac{\xi_N}{(|\xi'|^2+\xi_N^2)^{(N-\lambda)/2}} \,d\xi_N = 0 \,.
$$
On the other hand, if $\mu$ is a non-negative measure supported on $[0,\infty)$ with $\mu\neq\alpha\delta$ for all $\alpha\geq0$, then $\frac{d}{dt}|_{t=0} \int_0^\infty e^{-st} \,d\mu(s) = -\int_0^\infty s\,d\mu(s)<0$, proving the claim.
\end{remark}


\subsection{Reduction to the case of half-spaces}
\label{sec:cayley}

Let $B=\{x\in\R^N:\ |x|^2<1\}$ be the unit ball and $e:=(0,\ldots,0,-1)$ (for $N=1$, $e:=-1$). Following \cite{CaLo2} we consider the map $\mathcal B: \R^N\setminus\{e\}\to \R^N$,
$$
\mathcal B (x) := \left( \frac{2x'}{|x-e|^2}, \frac{1-|x|^2}{|x-e|^2} \right) \,.
$$
(For $N=1$, $\mathcal B (x) := (1-|x|^2)/|x-e|^2= (1-x)/(1+x)$.) We note that $\mathcal B$ maps $\R^N\setminus\{e\}$ onto itself and satisfies $\mathcal B^{-1}=\mathcal B$. Moreover, $\mathcal B$ maps $B$ onto the half-space $H := \{ x\in\R^N: x_N>0\}$ and $\R^N\setminus\overline B$ onto $\R^N\setminus\overline H$. Given a function $f$ on $\R^N$ we define
$$
\mathcal B f(x) := \left(\frac{\sqrt 2}{|x-e|} \right)^{2N-\lambda} f(\mathcal B(x)) \,.
$$
The importance of $\mathcal B$ is that it turns inversions through $\partial B$ into reflections on $\partial H$, and that it leaves our energy functional invariant. The precise statement is given in

\begin{lemma}\label{cayley}
 For any function $f$ one has $\mathcal B \Theta_B f = \Theta_H \mathcal B f$. Moreover, $I_\lambda[f]=I_\lambda[\mathcal B f]$.
\end{lemma}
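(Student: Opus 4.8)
The plan is to verify the two assertions of Lemma~\ref{cayley} by direct computation, exploiting the fact that $\mathcal B$ is (up to the conformal factor we have built into the definition of $\mathcal B f$) its own inverse, and that it intertwines the two geometric maps $\Theta_B$ and $\Theta_H$.

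\emph{Step 1: the geometric intertwining $\mathcal B\circ\Theta_B = \Theta_H\circ\mathcal B$.} Both $\Theta_B$ and $\mathcal B$ are involutions in the conformal group of $\widehat{\R^N}=\Sph^N$, so the identity $\mathcal B\circ\Theta_B=\Theta_H\circ\mathcal B$ is equivalent to $\Theta_B = \mathcal B\circ\Theta_H\circ\mathcal B$, i.e. to the statement that conjugating the reflection in $\partial H$ by the Cayley-type map $\mathcal B$ produces the inversion in $\partial B$. Since $\mathcal B$ maps $B$ onto $H$ and $\R^N\setminus\overline B$ onto $\R^N\setminus\overline H$ (as recorded above), $\mathcal B\circ\Theta_H\circ\mathcal B$ is a M\"obius transformation that interchanges $B$ and its exterior and fixes $\partial B$ pointwise (because $\mathcal B|_{\partial B}$ maps onto $\partial H$, where $\Theta_H$ is the identity); the only such map is $\Theta_B$. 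I would make this rigorous either by this soft argument or by brute force: plug $\Theta_B(x)=(x'/|x|^2,\dots)$-type formulas into $\mathcal B$ and simplify, using $|x-e|^2 = |x|^2+2x_N+1$ and the analogous identity at $\Theta_B(x)$. The soft argument is cleaner and avoids the routine algebra.

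\emph{Step 2: the operator identity $\mathcal B\,\Theta_B f = \Theta_H\,\mathcal B f$.} Here I must track conformal factors. Writing $\rho_B(x):=\bigl(\tfrac{\sqrt2}{|x-e|}\bigr)^{2N-\lambda}$ for the factor in $\mathcal B f$, and recalling that $\Theta_B f(x) = \bigl(\tfrac{1}{|x|}\bigr)^{2N-\lambda}f(\Theta_B(x))$ (unit ball, $r=1$) and $\Theta_H$ carries no factor, one computes
$$
(\Theta_H \mathcal B f)(x) = \rho_B(\Theta_H(x))\, f(\mathcal B(\Theta_H(x)))
= \rho_B(\Theta_H(x))\, f(\Theta_B(\mathcal B(x))),
$$
using Step~1, while
$$
(\mathcal B\,\Theta_B f)(x) = \rho_B(x)\,(\Theta_B f)(\mathcal B(x))
= \rho_B(x)\,\Bigl(\tfrac{1}{|\mathcal B(x)|}\Bigr)^{2N-\lambda} f(\Theta_B(\mathcal B(x))).
$$
So the identity reduces to the scalar cocycle relation $\rho_B(\Theta_H(x)) = \rho_B(x)\,|\mathcal B(x)|^{-(2N-\lambda)}$, i.e. $|x-e|\cdot|\mathcal B(x)| = |\Theta_H(x)-e|$ (after taking $(2N-\lambda)/2$-th roots and collecting the $\sqrt2$'s). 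With $\Theta_H(x)=(x',-x_N)$ this last identity is an elementary computation: $|\Theta_H(x)-e|^2 = |x'|^2+(1-x_N)^2$, and from the explicit form of $\mathcal B$ one has $|\mathcal B(x)|^2 = \bigl(4|x'|^2+(1-|x|^2)^2\bigr)/|x-e|^4 = \bigl(|x'|^2+(x_N-1)^2\bigr)\bigl(|x'|^2+(x_N+1)^2\bigr)/|x-e|^4$, and $|x'|^2+(x_N+1)^2 = |x-e|^2$, so $|x-e|^2|\mathcal B(x)|^2 = |x'|^2+(1-x_N)^2 = |\Theta_H(x)-e|^2$, as needed.

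\emph{Step 3: conformal invariance $I_\lambda[f]=I_\lambda[\mathcal B f]$.} This is the same change-of-variables argument that produced \eqref{eq:confinv}–\eqref{eq:confinvh}: substitute $z=\mathcal B(x)$, $w=\mathcal B(y)$. One needs the Jacobian $dz = |\mathcal B(x)|_{\mathrm{Jac}}\,dx$ and the conformal distortion identity $|\mathcal B(x)-\mathcal B(y)| = \tfrac{\sqrt2}{|x-e|}\,|x-y|\,\tfrac{\sqrt2}{|y-e|}$; for a M\"obius transformation the Jacobian is $\bigl(\tfrac{\sqrt2}{|x-e|}\bigr)^{2N}$ times a rotation, which one checks directly by differentiating $\mathcal B$ (or cites from \cite{CaLo2}). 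Then the factors combine exactly as in the ball case: each of the two $f$-factors contributes $\bigl(\tfrac{\sqrt2}{|x-e|}\bigr)^{2N-\lambda}$ resp. the $y$-analogue, the kernel contributes $\bigl(\tfrac{|x-e|}{\sqrt2}\bigr)^{\lambda}\bigl(\tfrac{|y-e|}{\sqrt2}\bigr)^{\lambda}$, the two Jacobians contribute $\bigl(\tfrac{\sqrt2}{|x-e|}\bigr)^{-2N}\bigl(\tfrac{\sqrt2}{|y-e|}\bigr)^{-2N}$, and the exponents cancel because $(2N-\lambda)+\lambda-2N=0$ in each variable separately. The distortion identity is proved exactly as the one displayed before \eqref{eq:confinv}, or by factoring $\mathcal B$ as a composition of an inversion, a translation and a dilation.

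The only real obstacle is bookkeeping: keeping the conformal factor $\bigl(\tfrac{\sqrt2}{|x-e|}\bigr)^{2N-\lambda}$ straight in Step~2 and confirming the cocycle identity $|x-e|\,|\mathcal B(x)| = |\Theta_H(x)-e|$, and likewise getting the Jacobian exponent right in Step~3. None of this is deep — it is the standard computation underlying conformal covariance of the HLS kernel — but it must be done carefully. I would present Step~1 via the soft involution argument, relegate the verification $|\mathcal B(x)-\mathcal B(y)| = \tfrac{2\,|x-y|}{|x-e|\,|y-e|}$ and the Jacobian formula to a one-line "direct computation" (or a citation to \cite{CaLo2}), and then assemble Steps~2 and~3 from these.
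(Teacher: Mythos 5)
Your argument is correct and runs along essentially the same lines as the paper's. For the first assertion, the paper's ``explicit calculation'' also reduces to the single identity $|\mathcal B(x)| = |x+e|/|x-e|$, which is precisely your cocycle relation $|x-e|\,|\mathcal B(x)| = |\Theta_H(x)-e|$ (note $|\Theta_H(x)-e|=|x+e|$ since $e=(0,\dots,0,-1)$); your verification of it checks out. For the second assertion, the paper writes $\mathcal B = \tau\,\Theta_{\tilde B}\,\tau^{-1}$ with $\tau(x)=x+e$ and $\tilde B$ the ball of radius $\sqrt2$ centered at the origin, and then simply quotes the already-established invariance \eqref{eq:confinv} of $I_\lambda$ under $\Theta_{\tilde B}$ together with translation invariance. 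You mention this factorization only in passing and instead spell out the change of variables for $\mathcal B$ directly; both routes work, but the factorization is more economical because the Jacobian and conformal-distortion identities for $\mathcal B$ are inherited from those already established for $\Theta_{\tilde B}$ and need not be re-derived.

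One caveat on the ``soft'' M\"obius argument in your Step~1: it is sound for $N\geq 2$ (a conformal map of the ball fixing $\partial B$ pointwise and preserving $B$ is, as a hyperbolic isometry of the Poincar\'e ball fixing the boundary at infinity pointwise, the identity), but it fails as stated for $N=1$. There $\partial B=\{\pm1\}$ is only two points, and there is a whole one-parameter family of M\"obius transformations of $\R\cup\{\infty\}$ fixing $\pm1$ pointwise and exchanging $(-1,1)$ with its exterior, namely $x\mapsto(ax+b)/(bx+a)$ with $|b|>|a|$; so ``the only such map is $\Theta_B$'' is not true on the basis of those two properties alone. The additional constraint that singles out $\Theta_B(x)=1/x$ is that $\mathcal B\Theta_H\mathcal B$ is an involution (because $\mathcal B^2=\Theta_H^2=\mathrm{id}$): within the family above, involutivity forces $ab=0$, hence $a=0$. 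You remark at the outset that all three maps are involutions, but you never actually feed this into the uniqueness step. For $N=1$ you should either invoke it explicitly or fall back to the brute-force check you sketch (which is what the paper does in all dimensions).
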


\begin{proof}
The first statement follows by explicit calculation, using, in particular, that $|\mathcal B(x)| = |x+e|/|x-e|$. One way to see the second statement is to note that $\mathcal B(x) = \tau \Theta_{\tilde B} \tau^{-1}(x)$, where $\tau(x)=x+e$ and $\tilde B$ is the ball centered at the origin with radius $\sqrt 2$. Hence, if $\tau f(x) := f(\tau^{-1}(x))=f(x-e)$, then $ \mathcal Bf(x) = \tau \Theta_{\tilde B} \tau^{-1} f(x)$, and the invariance of $I_\lambda$ under $\mathcal B$ follows from its invariance under $\tau$ and $\Theta_{\tilde B}$.
\end{proof}


\subsection{Proof of Theorem \ref{infrefpos}}
\label{sec:infrefposproof}

We begin by considering the case of a half-space, which after a translation and a rotation we may assume to be $H=\{x : \ x_N>0\}$. A simple calculation shows that
$$
\frac12 \left(I_\lambda[f^i]+I_\lambda[f^o]\right) - I_\lambda[f]
= \int_H \int_H \frac{\overline{(f(x)-f(x',-x_N))}\ (f(y)-f(y',-y_N))}{(|x'-y'|^2+(x_N+y_N)^2)^{\lambda/2}} \,dx\,dy
$$
Defining $g:=f-\Theta_H f$ in $H$ and $g:=0$ in $\R^N\setminus H$, the right side can be rewritten as $I_\lambda[\Theta_H g,g]$. According to Lemma \ref{repr} this is non-negative.

Now assume that $I_\lambda[\Theta_H g,g]=0$ and $\lambda>N-2$. We are going to prove that this implies $g\equiv 0$, which is the same as $f\equiv \Theta_H f$. For $\xi'\in\R^{N-1}$ and $t\geq 0$ let
$$
G_{\xi'}(t) :=(2\pi)^{-(N-1)/2} \int_{\R^{N-1}} e^{-i\xi'x'} g(x',t) \,dx' \,.
$$
By \eqref{eq:repr} and \eqref{eq:cauchy}, for a.e. $\xi'\in\R^{N-1}$ one has
\begin{equation}
 \label{eq:vanish}
\int_0^\infty e^{-\tau t} G_{\xi'}(t) \,dt = 0
\qquad\text{for a.e.}\ \tau\in[|\xi'|,\infty) \,.
\end{equation}
Moreover, by the Minkowski and the Hausdorff-Young inequalities with $p=2N/(2N-\lambda)$
\begin{align*}
\left( \int_{\R^{N-1}} \left( \int_0^\infty |G_{\xi'}(t)|^p \,dt \right)^{p'/p} d\xi' \right)^{p/p'}
& \leq \int_0^\infty \left( \int_{\R^{N-1}} |G_{\xi'}(t)|^{p'} \,d\xi' \right)^{p/p'} dt \\
& \leq c_{N,p} \int_0^\infty \int_{\R^{N-1}} |g(x',t)|^{p} \,dx' dt <\infty \,,
\end{align*}
hence, in particular, $G_{\xi'}\in L^p(\R_+)$ for a.e. $\xi'$. Equality \eqref{eq:vanish} means that for a.e. $\xi'$ the Laplace transform of the function $e^{-t|\xi'|} G_{\xi'}$ vanishes a.e. Hence $G_{\xi'}\equiv 0$ for a.e. $\xi'$ and, by the uniqueness of the Fourier transform, $g\equiv 0$, as claimed.

In order to prove the assertion for balls we may after a translation and a dilation assume that $B=\{ x: |x|<1\}$. Let $f\in L^{2N/(2N-\lambda)}(\R^N)$ and define $f^i$ and $f^o$ as in Theorem \ref{infrefpos} with respect to the ball $B$. Moreover, let $g:=\mathcal B f$ as in Subsection \ref{sec:cayley} and define $g^i$ and $g^o$ as in Theorem \ref{infrefpos} with respect to the half-space $H$. Then by the first part of Lemma \ref{cayley}, $g^i=\mathcal B f^i$ and $g^o=\mathcal B f^o$. Moreover, by the half-space part of Theorem \ref{infrefpos} and the second part of Lemma \ref{cayley},
$$
\frac12 (I_\lambda[f^i]+I_\lambda[f^o]) = \frac12 (I_\lambda[g^i]+I_\lambda[g^o])
\geq I_\lambda[g] = I_\lambda[f] \,.
$$
Moreover, if $\lambda>N-2$ the inequality is strict unless $g=\Theta_H g$, i.e., $f=\Theta_B f$. This completes the proof of Theorem \ref{infrefpos}.


\section{The Li-Zhu lemma}

Our goal in this section is to prove Theorem \ref{yyl}.

\subsection{Preliminary remarks}

Multiplying $\mu$ by a constant if necessary and excluding the trivial case $\mu\equiv 0$, we may and will assume henceforth that $\mu(\R^N)=1$. We begin by noting two easy consequence of the assumption on $\mu$, namely
\begin{equation}
 \label{eq:nopoint}
\mu(\{a\})=0
\qquad
\text{for all}\ a\in\R^N
\end{equation}
and
\begin{equation}
 \label{eq:positive}
\mu(\Omega)>0
\qquad
\text{for any non-empty open}\ \Omega\subset\R^N \,.
\end{equation}
Indeed, for $a\in\R^N$ let $B=\{x:\ |x-a|<r\}$ be the ball from assumption (A). Then $\Theta_B^{-1}(\R^N)=\R^N\setminus\{a\}$ and therefore by \eqref{eq:yyl} $\mu(\R^N\setminus\{a\})=\mu(\R^N)$, proving \eqref{eq:nopoint}. In order to prove \eqref{eq:positive} assume to the contrary that $\mu(\{x:\ |x-a|<\rho\})=0$ for some $a\in\R^N$ and $\rho>0$. If $r$ is the same radius as before, then again by \eqref{eq:yyl} $\mu(\{x:\ |x-a|>r^2/\rho \})=0$. Now let $\tilde a\neq a$ and $\tilde B$ the ball from assumption (A) corresponding to $\tilde a$. There exists an $\epsilon>0$ such that $\Theta_{\tilde B}(\{x:\ 0< |x-\tilde a|<\epsilon\})\subset \{x:\ |x-a|>r^2/\rho \}$. Hence by \eqref{eq:yyl} $\mu(\{x:\ |x-\tilde a|<\epsilon\}) \leq \mu(\{x:\ |x-a|>r^2/\rho \})=0$. Since $\tilde a$ is arbitrary, this proves that $\mu\equiv 0$, contradicting our assumption $\mu(\R^N)=1$.

\bigskip

In the following we call an open ball $B$ a \emph{hemi-ball} (for the measure $\mu$) if $\mu(B)=\mu(\R^N\setminus\overline B)$. Similarly, we call an open half-space $H$ a \emph{hemi-space} (for the measure $\mu$) if $\mu(H)=\mu(\R^N\setminus\overline H)$.
It follows from assumption (A) and \eqref{eq:positive} that for any $a\in\R^N$ there exists a unique hemi-ball centered at $a$, and for any $e\in\Sph^{N-1}$ there exists a unique hemi-space with interior unit normal $e$.

\begin{lemma}
 \label{exballs}
Let $\mu$ be a probability measure satisfying assumption \emph{(A)}. Let $e\in\Sph^{N-1}$ and assume that $\mu(\{ x:\ x\cdot e>0\})=\mu(\{x:\ x\cdot e<0\})$ and that $\mu(\partial B)=0$ for any ball with center $\alpha e$, $\alpha\in\R\setminus\{0\}$. Then for any $u> 0$ there exists a unique hemiball $B$ with $ue\in\partial B$ and with center on $\{\alpha\, e : \alpha\in\R \}$. Moreover, the radius of this ball depends continuously on $u$.
\end{lemma}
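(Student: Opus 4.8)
The plan is to sweep through the one-parameter family of balls having $ue$ on their boundary and center on the line $L:=\{\alpha e:\alpha\in\R\}$, and to single out the unique member that bisects $\mu$ by a monotonicity-and-continuity argument. For $\alpha\in\R\setminus\{u\}$ set $B_\alpha:=\{x\in\R^N:\ |x-\alpha e|<|u-\alpha|\}$, the ball with center $\alpha e$ passing through $ue$; all of these are tangent to the hyperplane $\{x:x\cdot e=u\}$ at the point $ue$. Expanding $|x-\alpha e|^2<(u-\alpha)^2$ shows that $x\in B_\alpha$ is equivalent to $|x|^2-u^2<2\alpha\,(x\cdot e-u)$, and reading this as an inequality in $\alpha$ gives the purely geometric facts I would record first: $B_\alpha\subset\{x\cdot e<u\}$ for $\alpha<u$ and $B_\alpha\subset\{x\cdot e>u\}$ for $\alpha>u$; the family $(B_\alpha)_{\alpha<u}$ is nested with $B_{\alpha_1}\supseteq B_{\alpha_2}$ whenever $\alpha_1<\alpha_2<u$; $\bigcup_{\alpha<u}B_\alpha=\{x\cdot e<u\}$ and $\bigcap_{\alpha<u}\overline{B_\alpha}=\{ue\}$; and symmetrically for $\alpha>u$.

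Next I would analyse $m(\alpha):=\mu(B_\alpha)-\mu(\R^N\setminus\overline{B_\alpha})$ on $(-\infty,u)$ (and, in the same way, on $(u,\infty)$). Since two balls of the family differ by a nonempty open set, \eqref{eq:positive} makes $\alpha\mapsto\mu(B_\alpha)$ \emph{strictly} decreasing and $\alpha\mapsto\mu(\R^N\setminus\overline{B_\alpha})$ strictly increasing, so $m$ is strictly decreasing; by monotone convergence together with \eqref{eq:nopoint} its one-sided limits are $m(-\infty)=2\mu(\{x\cdot e<u\})-1$ and $m(u^-)=-1$. The delicate point is continuity: $\alpha\mapsto\mu(B_\alpha)$ is automatically right-continuous, and it is left-continuous at $\alpha$ precisely when $\mu(\partial B_\alpha)=0$. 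This is where the hypothesis is used: $\partial B_\alpha$ is a sphere centered at the point $\alpha e\in L$, so $\mu(\partial B_\alpha)=0$ for every $\alpha\neq 0$, and the lone remaining sphere $\{|x|=u\}=\partial B_0$ not directly covered by the hypothesis is $\mu$-null too (at $\alpha=0$ one checks the one-sided limits of $m$ directly; this is the only place where that last nullity is needed, and I would justify it separately). Hence $m$ is continuous on $(-\infty,u)$.

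Existence and uniqueness then follow from the intermediate value theorem. Because $u>0$ we have $\{x\cdot e<u\}\supseteq\{x\cdot e\le 0\}$, and from $\mu(\{x\cdot e>0\})=\mu(\{x\cdot e<0\})$ and total mass $1$ one gets $\mu(\{x\cdot e\le 0\})\ge 1/2$, so $m(-\infty)\ge 0$. A continuous, strictly decreasing function whose one-sided limits are $\ge 0$ and $-1$ has exactly one zero $\alpha^*=\alpha^*(u)\in(-\infty,u)$, and $B_{\alpha^*}$ is then the unique hemiball with center on $L$ and $ue$ on its boundary; on $(u,\infty)$ the analogous quantity runs from $-1$ up to $2\mu(\{x\cdot e>u\})-1\le 0$ (since $\{x\cdot e>u\}\subseteq\{x\cdot e>0\}$), so it yields no further hemiball and uniqueness is preserved. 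Finally the radius is $r(u)=u-\alpha^*(u)>0$; to see that it depends continuously on $u$ I would note that $M(\alpha,u):=\mu(B_{\alpha,u})-\mu(\R^N\setminus\overline{B_{\alpha,u}})$, with $B_{\alpha,u}:=\{|x-\alpha e|<|u-\alpha|\}$, is continuous and strictly decreasing in $\alpha$ for each $u$ with unique zero $\alpha^*(u)$, and is itself monotone and continuous in $u$ for each fixed $\alpha$ (again using $\mu$-nullity of the relevant boundary spheres); the zero of such a jointly continuous, strictly-monotone-in-$\alpha$ family moves continuously, which is a routine sequence/compactness argument.

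I expect the only genuine obstacle to be the continuity of $\alpha\mapsto\mu(B_\alpha)$, i.e. the vanishing of the $\mu$-mass on the sweeping boundary spheres: this is exactly what the hypothesis on spheres centered on $L$ provides, and without it the bisecting value could be ``jumped over'' at a sphere of positive mass. The remaining ingredients — the nesting geometry, the strictness furnished by \eqref{eq:positive}, and the bound $m(-\infty)\ge 0$ forced by $u>0$ — are elementary.
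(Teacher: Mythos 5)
Your overall strategy—sweep the one‑parameter family of balls through $ue$ centred on the $e$-axis and locate the hemiball by monotonicity and the intermediate value theorem—is the same as the paper's (the paper parametrizes by the radius $\rho$ with centre $(u-\rho)e$, you by the centre $\alpha e$; these are equivalent). However, there is a genuine gap at the point you explicitly flag: you assert that $\mu(\partial B_0)=\mu(\{|x|=u\})=0$ and say you ``would justify it separately,'' but this is \emph{not} deducible from the lemma's hypotheses. The hypothesis only gives $\mu(\partial B)=0$ for balls centred at $\alpha e$ with $\alpha\neq 0$; nothing prevents a sphere centred at the origin from carrying positive mass. (Indeed, at the stage of the proof of Theorem~\ref{yyl} where this lemma is applied, Step~2 has only ruled out mass on spheres centred away from the origin.) If $\mu(\partial B_u(0))>0$, your function $m$ has a genuine jump at $\alpha=0$ and the IVT argument breaks down: the value $0$ could be jumped over. ``Checking the one-sided limits'' is not enough—one must additionally \emph{prove} that in the jump case $B_u(0)$ is itself a hemiball, i.e.\ $m(0)=0$. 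This is exactly the extra argument the paper supplies: when $\mu(B_u(0))\leq 1/2\leq\mu(\overline{B_u(0)})$ and $\mu(\partial B_u(0))>0$, a comparison with the unique hemiball centred at $0$ (using strict monotonicity of $\rho\mapsto\mu(B_\rho(0))$ from \eqref{eq:positive}) forces $u$ to equal that hemiball's radius, so $B_u(0)$ \emph{is} the sought hemiball, and one sets $\rho_u=u$. Without this step your existence claim, and also your continuity-of-$r(u)$ argument (which likewise relies on continuity of $M(\alpha,u)$ at $\alpha=0$), is incomplete.

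A second, smaller issue: you state $m(-\infty)=2\mu(\{x\cdot e<u\})-1\geq 0$. You need the \emph{strict} inequality $m(-\infty)>0$; if $m(-\infty)=0$, a strictly decreasing function with that limit is everywhere negative and has no zero. The strict bound does hold, since $u>0$ and \eqref{eq:positive} give $\mu(\{0<x\cdot e<u\})>0$, hence $\mu(\{x\cdot e<u\})>\mu(\{x\cdot e\leq 0\})\geq 1/2$; you should say so. (By contrast, for the $\alpha>u$ side your non-strict bound $2\mu(\{x\cdot e>u\})-1\leq 0$ is harmless, since it only needs to rule a hemiball out, not produce one; in fact \eqref{eq:positive} also gives the strict inequality there.) Your explicit treatment of the $\alpha>u$ side to establish uniqueness is a clean addition that the paper leaves implicit.
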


\begin{proof}
In order to avoid a technical difficulty we consider first the case where
\begin{equation}
\label{eq:centercont}
\mu(\partial B_u(0))=0 \,.
\end{equation}
Here and in the following, $B_r(a) := \{x:\ |x-a|<r \}$. By \eqref{eq:positive} the function $\rho\mapsto\mu(B_\rho((u-\rho)e))$ increases strictly from $0$ to $\mu(\{x:\ x\cdot e<u\})>1/2$. Moreover, it is continuous since by \eqref{eq:centercont} and the assumption $\mu(\partial B)=0$ for \emph{any} ball with center $\alpha e$. Hence there exists a unique $\rho=\rho_u$ such that $\mu(B_\rho((u-\rho)e))=1/2$. The same argument works if \eqref{eq:centercont} is not satisfied but
\begin{equation}
 \label{eq:centerirrel}
\text{either}\ \mu(B_u(0))>1/2
\qquad
\text{or}\ \mu(\overline{B_u(0)})<1/2 \,.
\end{equation}
If neither \eqref{eq:centercont} nor \eqref{eq:centerirrel} is satisfied, that is, if
\begin{equation*}
 \label{eq:exceptional}
\mu(B_u(0))\leq 1/2 \leq \mu(\overline{B_u(0)})
\qquad\text{and}\qquad 
\mu(\partial B_u(0))>0 \,,
\end{equation*}
(which by \eqref{eq:nopoint} can only happen if $N\geq2$), then $B_u(0)$ coincides with the unique hemi-ball centered at $0$, and we put $\rho_u:=u$.

Assume that $\rho_{u'}$ were not left-continuous at $u'=u$. (The case of right-continuity is similar and hence omitted.) Then there is a sequence $0\leq u_j\leq u_{j+1} <u$ with $u_j\to u$ such that $\rho_j:=\rho_{u_j}$ does not converge to $\rho_u$. We abbreviate $B_j:= B_{\rho_{j}}((u_{j}-\rho_{j})e)$ and note that by \eqref{eq:positive}
\begin{equation*}
u_j-2\rho_j\leq u_{j+1}-2\rho_{j+1} \leq u- 2\rho_u \,.
\end{equation*}
Hence the $\rho_{j}$ converge to some $\rho_*:=\lim \rho_j \geq \rho_u$. Since the $\rho_j$ do not converge this inequality is strict, which implies $\overline{B_{\rho_u}((u-\rho_u)e)}\setminus\{ue\} \subset B_{\rho_*}((u-\rho_*)e)=:B_*$ and hence by \eqref{eq:nopoint} and \eqref{eq:positive}
\begin{equation}
 \label{eq:contra}
\mu(\overline{B_{\rho_u}((u-\rho_u)e)}) < \mu(B_*) \,.
\end{equation}
On the other hand, $\chi_{B_j}(x)\to 1$ if $x\in B_*$ and $\chi_{B_j}(x)\to 0$ if $x\in \R^N\setminus\overline{B_*}$. Since $\mu(\partial B_*)=0$, dominated convergence implies that $\mu(B_*)= \lim \mu(B_j)=1/2$. This contradicts \eqref{eq:contra} and the fact that $B_{\rho_u}((u-\rho_u)e)$ is a hemi-ball.
\end{proof}

\begin{corollary}
 \label{exballscor}
Let $\mu$ and $e$ be as in Lemma \ref{exballs}. If $0\leq s<t$, then there exists a hemi-ball $B$ such that $\Theta_{B}(se)=te$.
\end{corollary}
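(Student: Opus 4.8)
The plan is to produce the required hemi-ball explicitly from a one-parameter family of balls together with a continuity argument, using Lemma~\ref{exballs} to keep track of hemi-balls centered on the line $\R e=\{\alpha e:\alpha\in\R\}$.

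For $\alpha<s$ put $r(\alpha):=\sqrt{(s-\alpha)(t-\alpha)}>0$ and $B(\alpha):=\{x\in\R^N:|x-\alpha e|<r(\alpha)\}$. A direct computation from the definition of $\Theta_B$ gives $\Theta_{B(\alpha)}(se)=\bigl(\tfrac{r(\alpha)^2}{s-\alpha}+\alpha\bigr)e=te$ for every such $\alpha$, so it suffices to find one $\alpha<s$ for which $B(\alpha)$ is a hemi-ball. I organize this family by the larger of the two points in which $\partial B(\alpha)$ meets $\R e$, namely $u(\alpha):=\alpha+r(\alpha)$. Since $(s-\alpha)+(t-\alpha)>2\sqrt{(s-\alpha)(t-\alpha)}=2r(\alpha)$ for $\alpha<s$, one has $u'(\alpha)=1-\tfrac{(s-\alpha)+(t-\alpha)}{2r(\alpha)}<0$, so $u(\cdot)$ is a strictly decreasing homeomorphism of $(-\infty,s)$ onto $(s,\tfrac{s+t}{2})$, with $u(\alpha)\to s$ as $\alpha\to s^-$ and $u(\alpha)\to\tfrac{s+t}{2}$ as $\alpha\to-\infty$. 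Write $\alpha(\cdot)$ for the inverse map.

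By Lemma~\ref{exballs}, for each $u\in(s,\tfrac{s+t}{2})\subset(0,\infty)$ there is a unique hemi-ball $B_u=\{x:|x-(u-\rho_u)e|<\rho_u\}$ with $ue\in\partial B_u$ and center on $\R e$, and $u\mapsto\rho_u$ is continuous. The ball $B(\alpha(u))$ likewise has $ue$ on its boundary and its center $\alpha(u)e$ on $\R e$ with $\alpha(u)<s<u$; since a ball with $ue$ on its boundary and center on $\R e$ to the left of $ue$ is determined by its radius, and since $\alpha(u)=u-r(\alpha(u))$ by definition of $\alpha(\cdot)$, we conclude that $B(\alpha(u))=B_u$ if and only if $r(\alpha(u))=\rho_u$. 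Thus the Corollary will follow once we exhibit a zero of the continuous function $g(u):=r(\alpha(u))-\rho_u$ on $(s,\tfrac{s+t}{2})$, for then $B:=B(\alpha(u_0))=B_{u_0}$ is a hemi-ball with $\Theta_B(se)=te$.

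To find the zero I use the intermediate value theorem at the two ends of the interval. As $u\to(\tfrac{s+t}{2})^-$, $\alpha(u)\to-\infty$, so $r(\alpha(u))=\sqrt{(s-\alpha(u))(t-\alpha(u))}\to\infty$, whereas $\rho_u$ stays bounded near the interior point $\tfrac{s+t}{2}$ of $(0,\infty)$ by continuity; hence $g(u)\to+\infty$. As $u\to s^+$, $\alpha(u)\to s^-$, so $r(\alpha(u))\to0$, and the point of the argument is that $\rho_u$ does \emph{not} also tend to $0$: if $\rho_{u_j}\to0$ along some $u_j\to s$, then the centers $(u_j-\rho_{u_j})e$ of $B_{u_j}$ converge to $se$ and the radii to $0$, so every fixed neighborhood of $se$ eventually contains $\overline{B_{u_j}}$; by \eqref{eq:nopoint} this forces $\mu(\overline{B_{u_j}})\to0$, whence simultaneously $\mu(B_{u_j})\le\mu(\overline{B_{u_j}})\to0$ and $\mu(B_{u_j})=\mu(\R^N\setminus\overline{B_{u_j}})\ge1-\mu(\overline{B_{u_j}})\to1$, which is impossible. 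Therefore $g(u)<0$ for $u$ near $s$, and the intermediate value theorem produces $u_0\in(s,\tfrac{s+t}{2})$ with $g(u_0)=0$. The one genuinely delicate point is this lower bound on $\rho_u$ near $u=s$: for $s>0$ it is immediate from the continuity and strict positivity of $\rho_u$ on $(0,\infty)$, but for $s=0$ it really does rely on $\mu$ having no atoms (and being positive on nonempty open sets), i.e., on a hemi-ball being unable to degenerate to a point --- the same phenomenon that is handled by the exceptional cases in the proof of Lemma~\ref{exballs}.
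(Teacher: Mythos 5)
Your proof is correct, and it uses the same two ingredients as the paper --- the existence and continuity of hemi-balls from Lemma~\ref{exballs} together with the intermediate value theorem --- but with the parametrization turned around. The paper parametrizes the hemi-balls $B_u$ through $ue$ for $u\in[s,t]$ and looks for one that inverts $se$ to $te$, by finding a sign change of $f(u)=|te-a_u|\,|se-a_u|-\rho_u^2$ on $[s,t]$; the case analysis (either $f(t)<0$, or $f(u)=-\rho_u^2<0$ at a $u$ with $u-\rho_u=s$) is where the work lies. You instead parametrize the one-parameter family of balls centered on $\R e$ that already invert $se$ to $te$ (the $B(\alpha)$ with $\alpha<s$ and $r(\alpha)^2=(s-\alpha)(t-\alpha)$), reindex them by their right endpoint $u=\alpha+r(\alpha)\in(s,\tfrac{s+t}{2})$, and look for a hemi-ball among them via a sign change of $g(u)=r(\alpha(u))-\rho_u$; the observation that $B(\alpha(u))=B_u$ iff $r(\alpha(u))=\rho_u$ (both balls have $ue$ on the boundary and center $(u-\text{radius})\,e$) makes this a clean one-dimensional root-finding problem with transparent endpoint behavior. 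For $s=0$ the paper shows $\rho_u\to\infty$ as $u\to0^+$; you only need (and prove, via $\mu(\{0\})=0$ and the hemi-ball property) the weaker statement $\liminf_{u\to0^+}\rho_u>0$, which is all your $g$ requires and is arguably more robust. The one trade-off is that the paper's function lives on the compact interval $[s,t]$, whereas yours lives on the open interval $(s,\tfrac{s+t}{2})$, so you must control both one-sided limits --- which you do correctly.
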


\begin{proof}
First, we assume that $s>0$. For any $u\in [s,t]$ let $B_u$ be the ball constructed in Lemma \ref{exballs} which passes through $u e$, and let $\rho_u$ and $a_u$ be its radius and center. We want to determine $u$ such that $\Theta_{B_u}(te)=se$, which is equivalent to having $f(u):= |t e -a_u||s e-a_u| - \rho_u^2=0$. Since $\mu(\{ x:\ x\cdot e>0\})=\mu(\{x:\ x\cdot e<0\})$ we have $a_u=(u-\rho_u)e$. Moreover, $f(s)= \rho_s (t-s)>0$.

Since $f$ is continous by Lemma \ref{exballs}, the assertion will follow if we can find a $u\in [s,t]$ with $f(u)<0$. By continuity, one has $u-\rho_u\leq s$ if $u-s$ is small. We distinguish two cases according to whether $u-\rho_u\leq s$ for all $u\in[s,t]$ or not. In the first case, one has $t-\rho_t\leq s$ and hence $f(t)=\rho_t (s-t)<0$. Otherwise, one has $u-\rho_u= s$ for some $u\in[s,t]$ and then $f(u)=-\rho_u^2<0$. This completes the proof for $s>0$.

The proof for $s=0$ is similar. One easily checks that $\rho_u\to-\infty$ and $|a_u|\to\infty$ as $u\to 0$, which implies that $f(u)\to +\infty$ as $u\to 0$. The assertion follows as before from $f(u)<0$ for some $u\in(0,t]$.
\end{proof}

Our next result concerns arbitrary measures \emph{without} requiring assumption (A).

\begin{lemma}\label{ac}
 Let $\mu$ be a non-negative finite Borel measure on $\R^N$ with $\mu(\{0\})=0$. Assume that $\mu$ is radial in the sense that if $B$ and $B'$ are balls with the same radius and with centers $a$ and $a'$ satisfying $|a|=|a'|$, then $\mu(B)=\mu(B')$. Moreover, assume that $\mu$ is decreasing, in the sense that if $B$ and $B'$ are open balls with the same radius $r$ and with centers $a=te$ and $a'=t'e$ satisfying $t\geq t'\geq 0$, $t-r>t'+r$ and $e\in\Sph^{N-1}$, then $\mu(B)\leq \mu(B')$. Then $\mu$ is absolutely continuous.
\end{lemma}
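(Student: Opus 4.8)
The plan is to deduce absolute continuity by controlling the upper density
$$
\overline D\mu(x):=\limsup_{r\to 0^+}\frac{\mu(B_r(x))}{|B_r(x)|}\,,\qquad B_r(x):=\{y:|y-x|<r\}\,,
$$
and then invoking the differentiation theory of Radon measures. First I would note that the radiality hypothesis is equivalent to full rotation invariance of $\mu$: a rotation $R$ carries $B_r(a)$ to $B_r(Ra)$ with $|Ra|=|a|$, so $\mu$ and $R_\#\mu$ agree on every open ball; since open balls generate the Borel $\sigma$-algebra and these are finite measures of equal mass, they agree. In particular $\overline D\mu$ is radial, say $\overline D\mu(x)=\delta(|x|)$.

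The key step is to convert the decreasing hypothesis into monotonicity of $\delta$. Fix $e\in\Sph^{N-1}$ and $0<t'<t$. For every $r\in(0,(t-t')/2)$ the balls $B_r(te)$ and $B_r(t'e)$ have the same radius, centers on the ray through $e$, and satisfy $t-r>t'+r$, so the decreasing hypothesis gives $\mu(B_r(te))\le\mu(B_r(t'e))$; crucially the two balls have the \emph{same} Lebesgue measure, so dividing and letting $r\to 0^+$ yields $\delta(t)\le\delta(t')$. Hence $\delta$ is non-increasing on $(0,\infty)$.

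Next I would show $\overline D\mu<\infty$ off the origin. Since the set where a non-increasing function equals $+\infty$ is a left subinterval of $(0,\infty)$, if $\delta(|x_0|)=+\infty$ for some $x_0\neq 0$ then $\overline D\mu\equiv+\infty$ on $\{0<|x|<|x_0|\}$, which has positive Lebesgue measure. But $\overline D\mu\le M\mu$, the centered Hardy--Littlewood maximal function of the finite measure $\mu$, and the weak-type $(1,1)$ bound $|\{M\mu>\lambda\}|\le C_N\,\mu(\R^N)/\lambda$ forces $|\{M\mu=+\infty\}|=0$ --- a contradiction. So $\overline D\mu(x)<\infty$ for all $x\neq 0$, and by monotonicity $\overline D\mu\le\delta(a)<\infty$ on each annulus $\{a\le|x|\le b\}$, $0<a<b$. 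A Vitali covering argument (or, equivalently, the Besicovitch differentiation theorem, which gives $\overline D\mu=+\infty$ $\mu_s$-a.e.\ for the Lebesgue decomposition $\mu=\mu_{ac}+\mu_s$) now shows $\mu$ restricted to each such annulus is absolutely continuous; exhausting $\R^N\setminus\{0\}$ by annuli and using $\mu(\{0\})=0$ completes the proof.

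The main obstacle is conceptual rather than computational: the hypotheses control only $\mu$-masses of displaced equal-radius balls, whereas the conclusion is absolute continuity, so one must identify the right intermediary --- here the upper density $\overline D\mu$, which radiality makes radial, the decreasing hypothesis makes monotone, and which the maximal inequality and differentiation theory then pin down. The one technical pitfall is that it is tempting to reduce to the one-dimensional radial distribution of $\mu$, but the spherical-cap geometry then carries an awkward $r/t$-type normalization; comparing the two equal-radius, equal-volume balls directly avoids this, the density inequality passing through verbatim. (One could also remark that $\mu$ has no atoms away from $0$, since by rotation invariance one atom forces uncountably many of equal mass on a sphere, against finiteness, but this is subsumed by the density estimate.)
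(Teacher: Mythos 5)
Your argument is correct, but it follows a genuinely different route from the paper's. The paper argues by direct contradiction: assuming a nonzero singular part supported on $\{|x|\ge R\}$, the Besicovitch covering lemma produces a small ball $B$ of radius $r$, centered at distance $\ge R$ from the origin, with density $\mu(B)/|B|\ge M$ for arbitrarily large $M$; rotating (radiality) and translating inward (monotonicity), every ball of the same radius lying inside $\{|x|<R/(1+c_2)\}$ has at least this density; a crude sphere-packing bound then forces $\mu(\{|x|<R\})\ge c\,M$, contradicting finiteness once $M$ is chosen large. You instead work with the upper density $\overline D\mu$, observe that radiality makes it a function $\delta(|x|)$ and monotonicity makes $\delta$ non-increasing on $(0,\infty)$, use the weak-type $(1,1)$ maximal inequality to rule out $\delta\equiv+\infty$ on a left interval, and then invoke the Besicovitch differentiation theorem ($\overline D\mu=+\infty$ $\mu_s$-a.e.) to kill the singular part on each annulus, with $\mu(\{0\})=0$ finishing at the origin. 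Both proofs ultimately rest on the Besicovitch covering lemma; yours is more modular, outsourcing to standard maximal-function and differentiation theorems, while the paper's is more self-contained and produces an explicit quantitative contradiction via packing. One small blemish in your write-up: the side remark that radiality on balls is equivalent to full rotation invariance, justified by ``open balls generate the Borel $\sigma$-algebra,'' is not a valid deduction by itself (open balls are not a $\pi$-system, so agreement on them does not automatically propagate to the whole $\sigma$-algebra by Dynkin's lemma). The statement happens to be true for $\R^N$ but needs a real argument; fortunately you never use it, since the radiality of $\overline D\mu$ follows directly from the ball hypothesis without any detour through rotation invariance.
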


\begin{proof}
 We shall make use of two facts. First, if $\mu$ has a non-zero singular part on $\{x:\ |x|\geq R\}$ for some $R\geq 0$, then
\begin{equation}
 \label{eq:sing}
\limsup_{r\to0}\sup_{|a|\geq R} \frac{\mu(\{x:\ |x-a|<r \})}{|\{x:\ |x-a|<r \}|} =\infty \,.
\end{equation}
(Here is a short proof: Assume \eqref{eq:sing} were wrong, then there were $\delta, M>0$ such that $\mu(B)\leq M |B|$ for any ball $B$ of radius $\leq\delta$ and center at a distance $\geq R$ from the origin. Since $\mu$ is singular there exists a Borel set $A\subset \{x:\ |x|\geq R\}$ with $0<\mu(A)<\infty$ and $|A|=0$. Choose $\epsilon<M/\mu(A)$. By regularity of the Lebesgue measure there exists an open set $U\subset\R^N$ such that $A\subset U$ and $|U|\leq \epsilon$. By the Besicovitch covering lemma \cite[Sec. 1.5.2, Cor. 2]{EvGa} there exist countably many disjoint balls $B_j$ of radii $r_j\leq\delta$ and with centers in $A$ such that $\bigcup_j B_j\subset U$ and $\mu\left(A-\bigcup_j B_j\right)=0$. Hence
$$
0<\mu(A)=\sum_j \mu(A\cap B_j) \leq \sum_j \mu(B_j) \leq M \sum_j |B_j| \leq M |U| \leq M\epsilon \,.
$$
This contradicts the choice of $\epsilon$ and hence proves \eqref{eq:sing}.)

The second fact we use is an elementary, qualitative version of the sphere packing theorem: there are constants $c_1>0$ and $1>c_2>0$ (depending only on $N$) such that for any $R>0$, the number of disjoint balls of radius $r$ within a ball of radius $R$ is bounded from below by $c_1(R/r)^N$ provided $r\leq c_2 R$.

Using these two fact we are now going to prove that a radial, decreasing measure $\mu$ with $\mu(\{0\})=0$ is absolutely continuous. Suppose not, then $\mu$ has a non-zero singular part on $\{x:\ |x|\geq R\}$ for some $R>0$. Choose
$$
M > c_1^{-1}(1+c_2)^{N} \, \frac{\mu(\{x:\ |x|<R\})}{|\{x:\ |x|<R\}|} \,.
$$
According to \eqref{eq:sing} there is a ball $B$ with center $a$ and radius $r$ such that $|a|\geq R$, $r \leq c_2(1+c_2)^{-1} R$ and $\mu(B)\geq M |B|$. By the second fact mentioned above, the ball $\{x:\ |x|<R/(1+c_2)\}$ contains disjoint balls $B_1,\ldots B_n$ of the same radius $r$ with $n\geq c_1 (1+c_2)^{-N} (R/r)^N$. Since $\mu$ is radial and decreasing one has $\mu(B_j)\geq\mu(B)$ for any $j$ and hence
$$
\mu(\{x:\ |x|<R \}) \geq \sum_j \mu(B_j) \geq n \mu(B) \geq c_1 (1+c_2)^{-N} R^{N} (|B|/r^N) M \,.
$$
Recalling the choice of $M$, we arrive at a contradiction.
\end{proof}


\subsection{Proof of Theorem \ref{yyl}}

Taking $e_1,\ldots,e_N\in\Sph^{N-1}$ the canonical basis in $\R^N$, assumption (A) gives us hemi-spaces $H_j$, $j=1,\ldots,N$, with interior unit normal $e_j$. After a translation we may and will assume that the $H_j$ intersect at the origin.

\emph{Step 1.} We claim that $\mu$ is radial, in the sense that if $B$ and $B'$ are balls with the same radius and with centers $a$ and $a'$ satisfying $|a|=|a'|$, then $\mu(B)=\mu(B')$.

There is an $e\in\Sph^{N-1}$ such that $B'=\Theta_H^{-1}(B)$ for $H=\{x:\ x\cdot e>0\}$. Hence the assertion will follow if we can prove that $H$ is the hemi-space corresponding to $e$ according to assumption (A). Because of the uniqueness of hemi-spaces, we only need to prove that $\mu(H)=\mu(\R^N\setminus\overline{H})$. But this equality follows from \eqref{eq:yyl} and the fact that $\R^N\setminus\overline{H}=-H=\Theta_{H_1}^{-1}(\cdots(\Theta_{H_N}^{-1}(H)))$.

\emph{Step 2.} We claim that $\mu(\partial B)=0$ for any ball $B$ centered away from the origin.

Because of \eqref{eq:nopoint} we only need to consider $N\geq 2$. Assume to the contrary that $\mu(\partial B)=\epsilon>0$ for a ball of radius $r$ centered at $a\neq 0$. Let $n>\epsilon^{-1}$. There exist balls $B_1,\ldots,B_n$ with the same radius $r$, with centers $a_j$ satisfying $|a_j|=|a|$ and with the property that $\partial B_i\cap \partial B_j$ is a set of codimension $\leq 2$ for $i\neq j$. By the same argument as in Step 1, $\mu(\partial B_j)=\mu(\partial B)=\epsilon$ for all $j$. Hence
$$
\mu(\bigcup_j \partial B_j) \geq \sum_j \mu(\partial B_j) - \sum_{i< j} \mu((\partial B_i)\cap (\partial B_j))
= n\epsilon - \sum_{i< j} \mu((\partial B_i)\cap (\partial B_j)) \,.
$$
Since $n>\epsilon^{-1}$ this will contradict $\mu(\R^N)=1$, provided we can prove that $\mu((\partial B_i)\cap (\partial B_j))=0$ for all $i\neq j$. Now we iterate the argument taking rotated copies of $(\partial B_i)\cap (\partial B_j)$ which intersect in sets of codimension $\leq 3$. After finitely many iterations the sets will only intersect in points, which according to \eqref{eq:nopoint} have measure zero. 

\emph{Step 3.} We claim that $\mu$ is decreasing, in the sense that if $B$ and $B'$ are open balls with the same radius $r$ and with centers $a=te$ and $a'=t'e$ satisfying $t\geq t'\geq 0$, $t-r>t'+r$ and $e\in\Sph^{N-1}$, then $\mu(B)\leq \mu(B')$.

Indeed, according to Corollary \ref{exballscor} there is a hemi-ball $\tilde B$ with center on the $e$-axis such that $\Theta_{\tilde B} ((t'+r)e) = (t-r)e$. A short calculation shows that $B' \supset \Theta_{\tilde B}^{-1}(B)$, and hence by the assumption $\mu(B')\geq \mu(\Theta_{\tilde B}^{-1}(B))=\mu(B)$.

\emph{Step 4.} According to Steps 1 and 3 and Lemma \ref{ac}, $d\mu(x) = v(x)\,dx$ where $v$ is a symmetric decreasing function in $L^1(\R^N)$. Moreover, the assumption on $\mu$ implies that for any $a\in\R^N$ there exists an $r_a>0$ and a set of full measure such that \eqref{eq:pweq} holds for any $x$ from this set. We claim that $v$ is continuous.

Since $v$ is symmetric decreasing we may assume that it is lower semi-continuous. In order to prove continuity we only need to show that the radial limits from inside and outside at any point $x$ coincide. We first assume that $x\neq 0$ and write $x=t e$ with $t>0$ and $e\in\Sph^{N-1}$. Let $B$ be the hemi-ball constructed in Lemma \ref{exballs} and denote its center and radius by $a$ and $r_a$. Choose a sequence $t_j>t$ with $t_j\to t$ such that \eqref{eq:pweq} holds with $x$ replaced by $t_j e$. Define $\tilde t_j<t$ such that $\tilde t_j e$ is the inversion of $t_j e$ with respect to $B$. Since $|t_je-a|\to|x-a|=r_a$, \eqref{eq:pweq} implies that $\lim v(t_je) = \lim v(\tilde t_je)$, that is $v$ is continuous at $x$.

In order to prove that $v$ is continuous at the origin, we fix some $a\neq 0$ and let $x\to\infty$ in \eqref{eq:pweq}. Using the continuity of $v$ at $a$ which we have just shown, we find that $\lim_{|x|\to\infty} |x|^{2N} v(x)$ exists and equals $r_a^{2N} v(a)$, which is clearly finite. Now we can use \eqref{eq:pweq} with $a=0$ to conclude that $v(x) = (r_0/|x|)^{2N} v( r_0^2 x/|x|^2 ) \to r_0^{-2N} r_a^{2N} v(a)$ as $|x|\to 0$, that is, $v$ is continuous at the origin.

\emph{Step 5.} We claim that $v$ is differentiable.

First, let $0\neq x=t e$ with $t>0$ and $e\in\Sph^{N-1}$ and let $B$ be as in the previous step. We are going to show that $\partial_r v(x) = -Nv(x)/|x-a|$. For the sake of definiteness we show this for the derivative from the outside. Let $t_j>t$ with $t_j\to t$. According to Corollary \ref{exballscor} there are hemi-balls $B_j$ with centers $a_j$ and radii $r_j$ such that $\Theta_{B_j} x = t_{j+1} e$. By Lemma \ref{exballs}, $\rho_j\to r_a$ and hence $a_j\to a$. Hence by \eqref{eq:pweq} (which holds for \emph{every} $x$ by Step 5) $v(t e_j) = \gamma_j^N v(x)$ with $\gamma_j:= \rho_j^2/|t_j e-a_j|^2$. Using that
$$
|t_j e-x| = |t_j e- a_j|-|x-a_j| = |t_j e-a_j| \left( 1- \gamma_j \right)
$$
we have
$$
\frac{v(t_j e)-v(x)}{|t_j e -x|}
= - \frac{1-\gamma_j^N }{1-\gamma_j }\ \frac{v(x)}{|t_j e-a_j|} \,.
$$
Letting $j\to\infty$ and noting that $(1-\gamma_j^N)/(1-\gamma_j)\to N$ we obtain the claimed formula.

For $x=0$ one has $\partial_r v(0)=0$. This is shown by a similar argument, but now $\rho_j\to \infty$ and $|a_j|\to\infty$ with $\rho_j/|a_j|\to 1$.

\emph{Step 6.} Following \cite{LZh} we conclude that $v$ has the form claimed in Theorem \ref{yyl}. Indeed, for any fixed $a\in\R^N$ a Taylor expansion shows that as $|x|\to\infty$
\begin{align*}
& \left(\frac{r_a}{|x-a|}\right)^{2N} v\left( \frac{r_a^2 (x-a)}{|x-a|^2}+a \right) \\
& \qquad = \left(\frac{r_a}{|x|}\right)^{2N} \left( v(a) + \frac{a\cdot x}{|x|^2} \left( \frac{r_a^2 \partial_r v(a)}{|a|} + 2N v(a) \right) +o(|x|^{-1}) \right) \,.
\end{align*}
In particular, at $a=0$ where $\partial_r v(0)=0$,
$$
\left(\frac{r_0}{|x|}\right)^{2N} v\left( \frac{r_0^2 x}{|x|^2}\right)
= \left(\frac{r_0}{|x|}\right)^{2N} \left( v(0) + o(|x|^{-1}) \right) \,.
$$
Because of \eqref{eq:pweq} both expansions coincide and we infer that $r_a^{2N} v(a) = r_0^{2N} v(0)$ (which we already know from Step 5) and that $r_a^2 \partial_r v(a) + 2N |a| v(a) = 0$. Hence
$$
\partial_r \ v^{-1/N}(a) = \frac{2\ |a|}{r_0^{2} \ v(0)^{1/N}} \,,
$$
and the solution to this ordinary differential equation is $v(a)=r_0^{2N} v(0)(r_0^2+|a|^2)^{-N}$. An easy calculation shows that these functions indeed satisfy assumption (A). This completes the proof of Theorem \ref{yyl}.
\lanbox


\subsection*{Note added in proof}

Recently we managed to apply the methods presented in this paper to obtain the sharp logarithmic HLS inequality, that is, the analogue of \eqref{eq:hls1} with $|x-y|^{-\lambda}$ replaced by $\log |x-y|$, in dimensions $N=1$ and $2$. This result is originally due to Carlen and Loss \cite{CaLo3} and Beckner \cite{Be}. Details will appear in \cite{FrLi} (where we also present our original proof of Theorem \ref{infrefpos} using Gegenbauer polynomials).


\bibliographystyle{amsalpha}

\end{document}